\documentclass[12pt]{article}
\usepackage{amsmath, amsfonts, amsthm, amssymb, verbatim}
\usepackage{graphicx}
\usepackage[mathcal]{euscript}
\usepackage{amstext}
\usepackage{amssymb,mathrsfs}
\usepackage{bbm}
%%%%%%%%%%%%%%%%%%%%%%%%%%%%%%%%%%%%%%%%
% useful package to latin languages
\usepackage[latin1]{inputenc}
%
%%%%%%%%%%%%%%%%%%%%%%%%%%%%%%%%%%%%%%%%
\hsize=126mm \vsize=180mm
\parindent=5mm
%%%%%%%%%%%% %%%%%%%%%%%%%%%%%%%%%%%%%%%%
\newcommand{\R}{\mathbb R}

  \newcommand{\E}{\mathbb E}

\newcommand{\PP}{\mathbb P}

\newcommand{\calR} {\ensuremath {\mathcal{R}}}
\newcommand{\calF} {\ensuremath {\mathcal{F}}}

\newcommand \loc    {\text{loc}}

\newcommand{\dive}{{\rm div}}

\newcommand{\I}{\int}

%

%

%%%%%%%%%%%%%%%%%%%%%%%%%%%%%%%%%%%%%%%%%%%%%%%
\newtheorem{theorem}{Theorem}[section]

 \newtheorem{remark}[theorem]{Remark}
\newtheorem{lemma}[theorem]{Lemma}

\newtheorem{definition}[theorem]{Definition}
\newtheorem{hypothesis}[theorem]{Hypothesis}

%%%%%%%%%%%%%%%%%%%%%%%%%%%%%%%%%%%%%%%%%%%%%%%

%%%%%%%%%%%%%%%%%%%%%%%%%%%%%%%%%%%%%%%%%%%%%%%
\begin{document}

\title{    Stochastic continuity  equation with non-smooth velocity. }

\author{David A.C. Mollinedo \footnote{Departamento Acadêmico de Matem\'{a}tica, Universidade Tecnológica Federal do Paraná, Brazil. E-mail:  {\sl  davida@utfpr.edu.br}
},
Christian Olivera\footnote{Departamento de Matem\'{a}tica, Universidade Estadual de Campinas, Brazil.
E-mail:  {\sl  colivera@ime.unicamp.br}.
}}

\date{}

\maketitle

\textit{Key words and phrases.
Stochastic partial differential equation, Continuity  equation, Stochastic characteristic method, Regularization by noise, Commutator Lemma.}

\vspace{0.3cm} \noindent {\bf MSC2010 subject classification:} 60H15, %SPDE
 35R60, %SPDE
 35F10, %Initial conditions for 1-order lin DE
 60H30. %applications of Stoch. An.

%\vspace{0.3cm} \noindent {\bf MSC2000 subject classification:} 60H10
%, 35F15,  60H25, 60H15  .

%
%%%%%%%%%%%%%%%%%%%%%%%%%%%%%%%%%%%%%%%%%%%%%%%%%%%%%%%%%%%%%%%%%%%%%%%%%%%%
\begin{abstract}
In this article we study  the existence and uniqueness of solutions of the 
stochastic  continuity  equation with irregular coefficients.  

 \end{abstract}
%%%%%%%%%%%%%%%%%%%%%%%%%%%%%%%%%%%%%%%%%%%%%%%%%%%%%%%%%%%%%%%%%%%%%%%%%%%%
%
\maketitle

%\begin{center}
%{\large
%Christian Olivera\footnote{Research  supported  FAEPEX 1324/12, FAPESP 2012/18739-0, 2012/18780-0  . }}\\
%
%\textit{Departamento de
% Matem\'{a}tica, Universidade Estadual de Campinas, \\
% Campinas - SP, Brasil}
%\\  e-mail:  colivera@imeunicamp.br}
%\end{center}

%%%%%%%%%%%%%%%%%%%%%%%%%%%%%%%%%%%%%%%%%%%%%%%%%%%%
\section {Introduction} \label{Intro}
%%%%%%%%%%%%%%%%%%%%%%%%%%%%%%%%%%%%%%%%%%%%%%%%%%%%

Several physical phenomena arising in fluid dynamics and kinetic equations can be modeled by the
 continuity/ transport equation,
\begin{equation}\label{trasports}
    \partial_t u(t, x) + div ( b(t,x)  u(t,x) ) = 0 \, ,
\end{equation}

\noindent where $u$ is the physical
quantity that evolves in time. Such quantities are the vorticity of a fluid, or the density of a collection of
particles advected by a velocity field which is highly irregular, in the sense that it has a derivative given by
a distribution and a nonlinear dependence on the solution u. For application in the fluid dynamics
see Lions' books \cite{lion1}, \cite{lion2} and for applications in the domain of conservation laws see Dafermos' book \cite{Dafermos}.

\noindent Recently research activity has been devoted to study continuity equations with rough coefficients, showing a well-posedness result.
 We put focus in the uniqueness issue. Di Perna and Lions \cite{DL} have introduced the notion of renormalized solution to this equation: it is a solution such that

\begin{equation}\label{renord}
    \partial_t \beta (u(t, x) )+  div (b(t,x) \cdot  \beta (u(t,x))  = 0 .
\end{equation}

\noindent for any suitable non-linearity $\beta$. Notice that (\ref{renord}) holds for smooth solutions, by an immediate application of the chain-rule. The renormalization property asserts that nonlinear compositions of the solution are again solutions, or alternatively that the chain-rule holds in this weak
context. The overall result which motivates this definition is that, if the renormalization property holds, then solutions of (\ref{trasports}) are unique and stable.

\medskip

In the case when $b$ has $W^{1,1}$ spatial regularity (together with a condition of boundedness on the divergence) the commutator lemma between smoothing convolution and weak solution can be proved and, as a consequence, all $L^{\infty}$-weak solutions are renormalized. The theory has been generalized by L. Ambrosio \cite{ambrisio} to the case of only $BV$ regularity for b instead of $W^{1,1}$. In the case of two-dimensional vector-field, we also refer to the work of F. Bouchut and L. Desvillettes \cite{Bouchut2} that treated the case of divergence free vector-field with continuous coefficient, and to \cite{MH} in which this result is extended to vector-field with $L_{loc}^{2}$ coefficients with a condition of regularity on the direction of the vector-field. We refer the readers to two excellent summaries in 
\cite{ambrisio2} and \cite{lellis}.

\medskip

In recent years, much attention has been devoted to extensions of this theory under
random perturbations of the drift vector field, namely considering the following stochastic linear transport/continuity equation
\begin{equation}\label{trasport}
 \left \{
\begin{aligned}
    &\partial_t u(t, x) + Div  \big( ( b(t, x) + \frac{d B_{t}}{dt}) \cdot  u(t, x)  \big )= 0 \, ,
    \\[5pt]
    &u|_{t=0}=  u_{0} \, .
\end{aligned}
\right .
\end{equation}
Here, $(t,x) \in [0,T] \times \R^d$, $\omega \in \Omega$ is an element of the probability space $(\Omega, \PP, \calF)$, $b:\R_+ \times \R^d \to \R^d$ is a given vector field and $B_{t} = (B_{t}^{1},...,B _{t}^{d} )$ is a standard Brownian motion in $\mathbb{R}^{d}$. The stochastic integration is to be understood in the Stratonovich sense.

\medskip

A very interesting situation is when the stochastic problem is better behaved than the deterministic one. A first result in this direction was
given by F. Flandoli, M. Gubinelli and  E. Priola
in \cite{FGP2}, where they obtained well-posedness of the stochastic problem for an H\"older continuous drift term, with some integrability conditions on the divergence. Their approach is based on a careful analysis of the characteristics. Using a similar approach, E.Fedrizi and F. Flandoli in \cite{Fre1} obtained a well-posedness result in the class $W_{loc}^{1,p}$-solution under only some integrability conditions on the drift, with no assumption on the divergence, but for fairly regular initial conditions. There, it is only assumed that
\begin{equation}\label{LPSC}
    \begin{aligned}
    &b\in L^{q}\big( [0,T] ; L^{p}(\mathbb{R}^{d}) \big) \, , \\[5pt]
 \mathrm{for}  \qquad &\  p,q \in [2,\infty) \, , \qquad   \qquad  \frac{d}{p} + \frac{2}{q} < 1 \, .
\end{aligned}
\end{equation}
In fact, this condition (with local integrability) was first considered by Krylov and R\"{o}ckner in \cite{Krylov}, where they proved the existence and uniqueness of strong solutions for the SDE
\begin{equation}\label{itoass}
X_{s,t}(x)= x + \int_{s}^{t}   b(r, X_{s,r}(x)) \ dr  +  B_{t}-B_{s} \, ,
\end{equation}
such that
$$
 \mathbb{P}\Big( \int_0^T |b(t,X_t)|^2 \ dt< \infty \Big)= 1 \, .
$$
Similarly, we may consider for convenience the inverse  $Y_{s,t}:=X_{s,t}^{-1}$, which satisfies the following backward stochastic
differential equations,
\begin{equation}\label{itoassBac}
Y_{s,t}= y - \int_{s}^{t}   b(r, Y_{r,t}) \ dr  - (B_{t}-B_{s}),
\end{equation}
for $0\leq s\leq t$.

\medskip

The well-posedness of the Cauchy problem \eqref{trasport} under condition \eqref{LPSC} for measurable initial condition was also considered in \cite{NO} and \cite{NO2}. In \cite{Beck}, using a technique based on the regularizing effect observed on expected values of moments of the solution, well-posedness of \eqref{trasport} was obtained also for the limit cases of $p,q=\infty$ or when the inequality in \eqref{LPSC} becomes an equality. The uniqueness result in that paper are valid for solutions in weighted spaces.

\medskip

We mention that other approaches have also been used to study stochastic linear transport/continuity equations. For example, M. Maurelli in \cite{Maurelli} employed the Wiener chaos decomposition to deal with a weakly differentiable drift, in \cite{MNP14},
 S.A. Mohammed, T.K. Nilssen, F.N. Proske used  Malliavin calculus which allows to deal with just a bounded drift, and in \cite{Fre2} the authors introduced a new class of solutions. We would also like to mention the generalizations to transport-diffusion equations and the associated stochastic differential equations by A. Figalli \cite{Figalli} and X. Zhang \cite{Zhang}.

\medskip

The main issue of this paper is to prove uniqueness of  $L^{2}$-weak solutions for one-dimensional stochastic continuity
equation (\ref{trasport}) with unbounded measurable drift without assumptions on the divergence. More precisely, we assume that $b$ satisfies

\begin{equation*}%\label{cond3}
  |b(x)|\leq k (1 +|x|).
\end{equation*}

\medskip The proof is based on the fact that one primitive $V$ is regular and verifies
the transport equation
\begin{equation}\label{aux}
    \partial_t V(t, x) +  ( b(t, x) + \frac{d B_{t}}{dt}) \cdot  \nabla V(t, x)  = 0 \, .
\end{equation}

Then using a modified version of the commutator Lemma and the characteristic systems associated to the SPDE (\ref{aux}) we shall show that $V=0$ with initial condition equal to zero, which implies that
$u=0$.
\medskip

Other issue in this paper is to give a well-posedness result for solutions in the Sobolev spaces $H^{1}(\mathbb{R}^{d})$ under condition (\ref{LPSC}) with divergence equal to zero. In particular this result implies the persistence of the regularity for initial conditions $u_0\in H^{1}(\mathbb{R}^{d})$. The proof is based in the Commutator Lemma given by C. Le Bris and P. L. Lions in \cite{BrisLion}
 for functions with Sobolev regularity. This new result shows the uniqueness in the class of $H^{1}$-solutions, not covered in the previous works (see \cite{Beck,Fre1,NO,NO2}) under this condition.  

Throughout of this paper, we fix a stochastic basis with a $d$-dimensional Brownian motion $\big( \Omega, \mathcal{F}, \{\mathcal{F}_t: t \in [0,T] \}, \mathbb{P}, (B_{t}) \big)$.

%\medskip

\section{$L^{2}$- Solutions.}

In this section we assume the following hypothesis:

\begin{hypothesis}\label{hyp1}
The vector field $b$ satisfies
\begin{equation}\label{cond3-1}
  |b(x)|\leq k (1 +|x|)\,,
\end{equation}
and the initial condition holds
\begin{equation}\label{weight}
  u_0 \in  L^2(\R, w\,dx)
\end{equation}
where $w$ is the weight defined by $w(x)=e^{ 2  k_2 x^2}$ with $k_2=2(k+99 T k^2)$.
\end{hypothesis}

Now,  we denote by $b^{\epsilon}$ the standard mollification of $b$, and let   $X_t^{\epsilon}$  be  the associated flow given by  the SDE (\ref{itoass}) replacing  $b$ by $b^{\epsilon}$.  Similarly, we consider $Y^\epsilon_{t}$, which satisfies the backward SDE \eqref{itoassBac}. We also recall   the important results  in  \cite{nilssen} (see appendix ) : let $X_{t}^{\epsilon}$ be the corresponding stochastic flows, then for all $p\geq 1$   there are constants $C_1=C_1(k,p,T)$ and $C_2(k,p,T)$  such that
\begin{equation}\label{est3}
  \mathbb{E}[|\partial_x X_{t}^{\epsilon}(x)|^p]\leq C_1  t^{-\frac{1}{2}}  e^{C_2   x^{2}},
\end{equation}
the same results is valid for the backward flow $Y_{t}^{\epsilon}$  since it is  solution of the same SDE driven by the drifts $-b^{\epsilon}$.
We denote $\mu=(1+ |x|)^{2}$.

\subsection{Definition of solutions}
\begin{definition}\label{defisoluH}
A stochastic process $u\in   L^2(\Omega\times [0,T]\times \R, \mu dx) $  is called a  $L^{2}$- weak solution of the Cauchy problem \eqref{trasport} when: For any $\varphi \in C_0^{\infty}(\R)$, the real valued process $\int  u(t,x)\varphi(x)  dx$ has a continuous modification which is an $\mathcal{F}_{t}$-semimartingale, and for all $t \in [0,T]$, we have $\mathbb{P}$-almost surely
\begin{equation} \label{DISTINTSTR}
\begin{aligned}
    \int_{\R} u(t,x) \varphi(x) dx = &\int_{\R} u_{0}(x) \varphi(x) \ dx
	+ \int_{0}^{t} \!\! \int_{\R}   u(s,x)   \, b (x) \partial_x \varphi(x) \ dx ds
\\[5pt]
    & + \int_{0}^{t} \!\! \int_{\R}   u(s,x) \ \partial_x \varphi(x) \ dx \, {\circ}{dB_s} \, .
\end{aligned}
\end{equation}
\end{definition}

\begin{remark}\label{lemmaito}
Using the same idea as in Lemma 13 \cite{FGP2}, one can write the problem (\ref{trasport}) in It\^o form as follows, a  stochastic process $u\in   L^2(\Omega\times [0,T]\times \R, \mu dx) $ is  a $ L^{2}$- weak solution  of the SPDE (\ref{trasport}) iff for every test function $\varphi \in C_{0}^{\infty}(\mathbb{R})$, the process $\int u(t, x)\varphi(x) dx$ has a continuous modification which is a $\mathcal{F}_{t}$-semimartingale and satisfies the following It\^o's formulation

\[
\begin{aligned}
    \int_{\R} u(t,x) \varphi(x) dx = &\int_{\R} u_{0}(x) \varphi(x) \ dx
	+ \int_{0}^{t} \!\! \int_{\R}   u(s,x)   \, b (x) \partial_x \varphi(x) \ dx ds
\\[5pt]
    & + \int_{0}^{t} \!\! \int_{\R}   u(s,x) \ \partial_x \varphi(x) \ dx \, dB_s \,  + \frac{1}{2} \int_{0}^{t} \!\! \int_{\R}   u(s,x) \ \partial_x^{2} \varphi(x) \ dx \, ds.
\end{aligned}
\]

\end{remark}

\subsection{Existence.}

We shall here prove existence of  solutions  under hypothesis \ref{hyp1}.
\begin{lemma}
Assume that hypothesis \ref{hyp1} holds. Then there exists $L^2$-weak solution of the Cauchy problem \eqref{trasport}.
\end{lemma}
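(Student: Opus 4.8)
The plan is to construct the solution as the push-forward of the initial datum under the regularized stochastic flow, pass to the limit as $\epsilon\to0$, and verify that the limit satisfies the weak It\^o formulation from Remark~\ref{lemmaito}. Concretely, for each $\epsilon>0$ set
\[
  u^\epsilon(t,x) = u_0\bigl(Y^\epsilon_{0,t}(x)\bigr)\,\partial_x Y^\epsilon_{0,t}(x),
\]
the density transported by the mollified backward flow. Since $b^\epsilon$ is smooth, classical stochastic characteristics (as in \cite{FGP2}) show that $u^\epsilon$ is a smooth solution of \eqref{trasport} with $b$ replaced by $b^\epsilon$, hence in particular it satisfies the It\^o identity in Remark~\ref{lemmaito} with $b^\epsilon$ in place of $b$. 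The first task is therefore a uniform bound: using the change of variables $x\mapsto X^\epsilon_{0,t}(x)$ together with the gradient estimate \eqref{est3} for the forward flow and the linear growth \eqref{cond3-1}, one shows
\[
  \sup_{\epsilon>0}\ \mathbb{E}\!\int_0^T\!\!\int_{\R} |u^\epsilon(t,x)|^2\,\mu(x)\,dx\,dt \;<\;\infty,
\]
the weight $w(x)=e^{2k_2x^2}$ in hypothesis~\ref{hyp1} being precisely what absorbs the exponential factor $e^{C_2 x^2}$ coming from \eqref{est3} (this is where the constant $k_2=2(k+99Tk^2)$ is calibrated). This gives a subsequence converging weakly in $L^2(\Omega\times[0,T]\times\R,\mu\,dx)$ to some $u$.

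Next I would pass to the limit in the weak formulation. For a fixed test function $\varphi\in C_0^\infty(\R)$, the terms
$\int u_0\varphi\,dx$, $\int\!\!\int u^\epsilon\,\partial_x^2\varphi\,dx\,ds$ and $\int\!\!\int u^\epsilon\,\partial_x\varphi\,dx\,dB_s$ are linear in $u^\epsilon$ and handled by weak convergence (the stochastic integral via its $L^2(\Omega)$-isometry, after checking the integrands converge weakly in $L^2(\Omega\times[0,T])$). The genuinely delicate term is the drift term $\int_0^t\!\!\int_\R u^\epsilon(s,x)\,b^\epsilon(x)\,\partial_x\varphi(x)\,dx\,ds$, where both factors $u^\epsilon$ and $b^\epsilon$ vary with $\epsilon$: weak convergence of $u^\epsilon$ alone does not suffice against the varying coefficient $b^\epsilon$. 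I would resolve this by writing $u^\epsilon b^\epsilon = u^\epsilon(b^\epsilon-b)+u^\epsilon b$; the second piece passes by weak convergence since $b\,\partial_x\varphi$ has compact support and, by \eqref{cond3-1}, lies in $L^2_{\mathrm{loc}}$ against the weight $\mu^{-1}$, while the first piece is controlled by the uniform bound above times $\|b^\epsilon-b\|_{L^2(\mathrm{supp}\,\varphi)}\to0$ (strong convergence of mollifications on compacts). Care is needed because $u^\epsilon\to u$ only weakly, so I would instead test against $b^\epsilon-b$ times $\partial_x\varphi$ directly and use Cauchy--Schwarz in $(s,x)$ together with the uniform $L^2_\mu$ bound, so that only $\|(b^\epsilon-b)\partial_x\varphi\|_{L^2}\to0$ is needed, not any strong convergence of $u^\epsilon$.

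Finally, I would record that each $u^\epsilon$ is an $\mathcal{F}_t$-semimartingale in the required sense and that the semimartingale decomposition is stable under these limits, so $t\mapsto\int u(t,x)\varphi(x)\,dx$ has a continuous modification which is an $\mathcal{F}_t$-semimartingale satisfying the identity of Remark~\ref{lemmaito}; by the equivalence stated there, $u$ is an $L^2$-weak solution in the sense of Definition~\ref{defisoluH}. The main obstacle, as indicated, is the joint passage to the limit in the drift term with both the solution and the coefficient being regularized simultaneously; the whole argument hinges on the uniform weighted $L^2$ estimate, which in turn relies on the flow gradient bound \eqref{est3} and the exact choice of weight in hypothesis~\ref{hyp1}.
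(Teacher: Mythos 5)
Your proposal follows essentially the same route as the paper: regularize $b$, represent the solution of the regularized continuity equation through the stochastic flow (your Jacobian factor $\partial_x Y^{\epsilon}_{0,t}(x)$ is exactly the paper's factor $\exp\{-\int_0^t \dive\, b^{\epsilon}(\phi^{\epsilon}_s(\psi^{\epsilon}_t(x)))\,ds\}$), prove a uniform bound in $L^2(\Omega\times[0,T]\times\R,\mu\,dx)$ by a change of variables along the flow with the weight $w=e^{2k_2x^2}$ absorbing the Gaussian factor, extract a weakly convergent subsequence, and pass to the limit in the It\^o weak formulation. For the last step the paper merely cites the argument of Flandoli--Gubinelli--Priola, whereas you spell out the drift term; your splitting $u^{\epsilon}b^{\epsilon}=u^{\epsilon}(b^{\epsilon}-b)+u^{\epsilon}b$, using Cauchy--Schwarz against the uniform weighted bound and $b^{\epsilon}\to b$ in $L^2$ on the support of $\partial_x\varphi$, is a correct and slightly more explicit version of that step.

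Two repairs are needed. First, you compose the raw datum $u_0\in L^2(\R,w\,dx)$ with the flow and assert that $u^{\epsilon}$ is a smooth (classical) solution of the regularized equation; this is false unless the initial condition is also mollified, as the paper does with $u_0^{\epsilon}=\eta_{\epsilon}(u_0\ast\rho_{\epsilon})$ (Kunita's representation requires regular data; the mollification costs nothing, since the paper checks $\int |u_0^{\epsilon}|^2 e^{k_2 y^2}dy\le C\|u_0\|^2_{L^2(\R,w\,dx)}$). Second, the estimate you invoke, \eqref{est3}, controls \emph{positive} moments $\E[|\partial_x X^{\epsilon}_t(x)|^p]$, $p\ge 1$, at a fixed point $x$; after your change of variables $x\mapsto X^{\epsilon}_{0,t}(y)$ the quantity that actually appears is $\E\big[|\partial_y X^{\epsilon}_t(y)|^{-1}(1+|X^{\epsilon}_t(y)|)^2\big]$, a \emph{negative} moment of the forward Jacobian coupled with the position of the flow. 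The backward-flow version of \eqref{est3} would have to be evaluated at the random point $X^{\epsilon}_t(y)$, so it does not apply directly; what is really needed is the negative-moment bound $\E[|\tfrac{d}{dx}\phi^{\epsilon}_t(x)|^{-2}]\le k_1 t^{-3/8}e^{k_2x^2}$ proved in the appendix (the Girsanov computation which fixes $k_2=2(k+99Tk^2)$), combined with the fourth-moment estimate $\E[|\phi^{\epsilon}_t(x)|^4]\le C(|x|^4+T^4)$ and a Young-type splitting, which is exactly how the paper closes the uniform bound. With these two adjustments your argument coincides with the paper's proof.
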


\begin{proof}

{\it Step 1: Regularization.}

 Let $\{\rho_\varepsilon\}_\varepsilon$ be a family of standard symmetric mollifiers and $\eta$ a nonnegative smooth cut-off function supported on the ball of radius 2 and such that $\eta=1$ on the ball of radius 1. Now, for every $\varepsilon>0$, we introduce the rescaled functions $\eta_\varepsilon (\cdot) = \eta(\varepsilon \cdot)$. Thus, we  define the family of  regularized coefficients given by
$$b^{\epsilon}(x) = \eta_\varepsilon(x) ( b \ast \rho_\varepsilon  (x)) $$
and
$$u_0^\varepsilon (x) = \eta_\varepsilon(x) \big(  u_0 \ast \rho_\varepsilon (x)  \big) \,.$$
Clearly we observe that, for every  $\varepsilon>0$, any element $b^{\varepsilon}$, $u_0^\varepsilon$ are smooth (in space) and have  compactly supported with bounded derivatives of all orders.  We observe that to study the stochastic continuity equation (SCE) \eqref{trasport} is equivalent to study the stochastic transport equation given by (regularized version):
\begin{equation}\label{STE-reg}
 \left \{
\begin{aligned}
    &d u^\varepsilon (t, x) +  \nabla u^\varepsilon (t, x)  \cdot \big( b^\varepsilon (x)  dt +
 \circ d B_{t} \big) +\dive b^{\varepsilon}(x) \,u^\varepsilon (t,x) dt = 0\, ,
    \\[5pt]
    &u^\varepsilon \big|_{t=0}=  u_{0}^\varepsilon
\end{aligned}
\right .
\end{equation}
Following the classical theory of H. Kunita \cite[Theorem 6.1.9]{Ku} we obtain that

\[
u^{\varepsilon}(t,x) =  u_{0}^{\varepsilon} (\psi_t^{\varepsilon}(x)) \exp\bigg\{-\I_0^t \dive b^{\varepsilon}(\phi_s^{\varepsilon}(\psi_t^{\varepsilon}(x))) ds \bigg\}
\]
%\[
% =u_{0}^{\varepsilon} (\psi_t^{\varepsilon}(x)) \exp\bigg\{-\I_0^t \dive b^{\varepsilon}(\psi_{t-s}^{\varepsilon}(x)) ds \bigg\}
%\]
is the unique solution to the regularized equation \eqref{STE-reg}, where $\phi_t^{\varepsilon}$ is the flow associated  to the following stochastic differential equation (SDE):
\begin{equation*}
d X_t = b^\varepsilon (X_t) \, dt + d B_t \, ,  \hspace{1cm}   X_0 = x \,.
\end{equation*}
and $\psi_t^{\varepsilon}$ is the inverse of $\phi_t^{\varepsilon}$.

\bigskip

{\it Step 2: Boundedness.} Making the change of variables $y=\psi_t^{\varepsilon}(x)=(\phi_t^{\varepsilon}(x))^{-1}$ we have that
\begin{align*}
\int_{\R} \E[|u^{\varepsilon}(t,x)|^2]\, (1+ |x|)^{2} dx & =\int_{\Omega}\int_{\R} |u_{0}^{\varepsilon} (y)|^2 \exp\bigg\{-2\int_0^t \dive b^{\varepsilon}(\phi_s^{\varepsilon}(y)) ds \bigg\} \times \\[5pt]
& \quad\quad\quad\quad\quad\times\frac{d \phi_t^{\varepsilon}(y)}{dy} \ (1+ |\phi_t^{\varepsilon}(y)|)^{2}  \, dy \mathbb{P}(d\omega).
\end{align*}
Now, if we do a minor modification in the proof of the Lemma 3.6 of \cite{nilssen} (see appendix) we obtain that there are constants $k_1=k_1(k,T)$ and $k_2=2(k+99 T k^2)$ such that
\begin{align}\label{eq0}
\mathbb{E}\bigg[\bigg|\frac{d}{dx}\phi^{\varepsilon}_t(x)\bigg|^{-2}\bigg] = \mathbb{E}\bigg[   \exp\bigg\{-2\int_0^t \dive b^{\varepsilon}(\phi_s^{\varepsilon}(x)) ds \bigg\}    \bigg]               \leq  k_1  t^{-3/8} e^{k_2 x^2}  \,.
\end{align}
We also observe that
\begin{align}\label{eq2}
\mathbb{E}\bigg[ | \phi_t^{\varepsilon}(x)  |^{4}  \bigg]\leq C (|x|^{4} + T^{4})
\end{align}
Then we obtain 
\[
\mathbb{E} \bigg[ \bigg|\frac{d}{dx}\phi^{\varepsilon}_t(x)\bigg|^{-1} (1+ |\phi_t^{\varepsilon}(x)|)^{2}  \bigg]
\]

\[
 \leq
C \bigg(  \mathbb{E} \bigg|\frac{d}{dx}\phi^{\varepsilon}_t(x)\bigg|^{-2}  +   \mathbb{E}  \bigg|  (1+ |\phi_t^{\varepsilon}(x)|)^{4}
  \bigg|       \bigg)
 \leq C(k_1  t^{-3/8} e^{k_2 x^2} + T^4+x^4).
\]
Thus we deduce 
\begin{align}\label{uno}
\int_{\R} & \E[|u^{\varepsilon}(t,x)|^2](1+|x|)^{2}\, dx  \leq \int_{\R} |u_{0}^{\varepsilon} (y)|^2 \E\bigg[\bigg|\frac{d \phi_s^{\varepsilon}(y)}{dy}\bigg|^{-1} (1+ |\phi_t^{\varepsilon}(y)|)^{2} \bigg]\, dy \nonumber\\[5pt]
& \leq C \int_{\R} |u_{0}^{\varepsilon} (y)|^2     \big( k_1  t^{-3/8} e^{k_2 x^2} + T^4+y^4 \big)   \, dy \nonumber\\[5pt]
& \leq  Ck_1 t^{-3/8} \int_{\R} |u_{0}^{\varepsilon} (y)|^2 e^{k_2 y^2}  \ dy + C\int_{\R} |u_{0}^{\varepsilon} (y)|^2    e^{k_2 y^2}  \, dy  .
\end{align}
We observe that 
\begin{align}\label{dos}
\int_{\R} |u_{0}^{\varepsilon} (y)|^2 e^{k_2 y^2} dy & \leq  \int_{\R} \bigg[ e^{k_2 y^2} \bigg(\int_{\R}\rho_{\varepsilon}(y-x)|u_0(x)|^2dx\bigg)\bigg] dy \nonumber\\[5pt]
& = \I_{\R} \bigg[ |u_0(x)|^2 \bigg(\int_{B(x,\varepsilon)}\rho_{\varepsilon}(y-x)e^{k_2 y^2}dy\bigg)\bigg] dx \nonumber\\[5pt]
& = \I_{\R} \bigg[ |u_0(x)|^2 \bigg(\int_{B(0,\varepsilon)}\rho_{\varepsilon}(u)e^{k_2 (x+u)^2}du\bigg)\bigg] dx \nonumber\\[5pt]
& \leq \I_{\R} \bigg[ |u_0(x)|^2 e^{2k_2 x^2}\bigg(\int_{B(0,\varepsilon)}\rho_{\varepsilon}(u)e^{2 k_2u^2}du\bigg)\bigg] dx  \nonumber\\[5pt]
& \leq C \|u_0\|^2_{L^2(\R, wdx)} .
\end{align}
From (\ref{uno}) and (\ref{dos}) we conclude that
\begin{align*}
\|u^{\varepsilon}\|^2_{L^2(\Omega\times [0,T]\times \R, \mu dx)} \leq C(k,T) \|u_0\|^2_{L^2(\R, wdx)}\,.
\end{align*}
Therefore, the sequence $\{u^{\varepsilon}\}_{\varepsilon>0}$ is bounded in $L^2(\Omega\times [0,T]\times \R, \mu dx )$. Then  there exists a convergent subsequence, which we denote also by $u^{\varepsilon}$, such that converge weakly in $L^2(\Omega\times [0,T]\times \R, \mu dx)$ to some process $u\in L^2(\Omega\times [0,T]\times \R, \mu dx)$ .

\bigskip
{\it Step 3: Passing to the Limit.}
Now, if $u^{\varepsilon}$ is a solution of \eqref{STE-reg}, it is also a weak solution, that is, for any test function $\varphi\in C_0^{\infty}(\R)$, $u^{\varepsilon}$ satisfies (written in the Itô form):
\begin{align*}
\int_{\R} u^{\varepsilon}(t,x) \varphi(x) dx = &\int_{\R} u^{\varepsilon}_{0}(x) \varphi(x) \ dx + \int_{0}^{t} \!\! \int_{\R}   u^{\varepsilon}(s,x)   \, b^{\varepsilon} (x) \partial_x \varphi(x) \ dx ds \\
    & + \int_{0}^{t} \!\! \int_{\R}   u^{\varepsilon}(s,x) \ \partial_x \varphi(x) \ dx \, dB_s \,  + \frac{1}{2} \int_{0}^{t} \!\! \int_{\R}   u^{\varepsilon}(s,x) \ \partial_x^{2} \varphi(x) \ dx \, ds\,.
\end{align*}
Thus, for prove existence of the SCE \eqref{trasport} is enough to pass to the limit in the above equation along the convergent subsequence found. This is made through of the same arguments of \cite[theorem 15]{FGP2}.

\end{proof}

%%%%%%%%%%%%%%%%%%%%%%%%%%%%%%%%%%%%% Unicidade %%%%%%%%%%%%%%%%%%%%%%%%%%%%%%

\subsection{Uniqueness.}

\begin{theorem}\label{uni2}
Under the conditions of hypothesis \ref{hyp1}, uniqueness holds for  $L^{2}$- weak solutions of the Cauchy problem \eqref{trasport} in the following sense: if $u,v$ are $L^{2}$- weak solutions with the same initial data $u_{0}\in  L^2(\R, w\,dx)$, then  $u= v$ almost everywhere in $ \Omega  \times [0,T] \times \R$.
\end{theorem}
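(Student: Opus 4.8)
The plan is to argue by linearity: since \eqref{trasport} is linear it is enough to show that any $L^{2}$-weak solution $u$ with $u_{0}=0$ vanishes. First I would replace $u$ by its spatial primitive. Because $\mu^{-1}=(1+|x|)^{-2}\in L^{1}(\R)$, the hypothesis $u\in L^{2}(\Omega\times[0,T]\times\R,\mu\,dx)$ forces $u(t,\cdot)\in L^{1}(\R)$ for a.e.\ $(t,\omega)$, so
\[
V(t,x):=\int_{-\infty}^{x}u(t,y)\,dy
\]
is well defined, continuous in $x$, bounded with $\|V(t,\cdot)\|_{L^{\infty}(\R)}\le C\,\|u(t,\cdot)\|_{L^{2}(\R,\mu\,dx)}$, and satisfies $\partial_{x}V=u$. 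Inserting into \eqref{DISTINTSTR} test functions which are primitives of elements of $C_{0}^{\infty}(\R)$ — after first extending \eqref{DISTINTSTR}, by a cut-off argument exploiting the decay of $u$ against $\mu\,dx$, to test functions that are bounded with compactly supported derivative — one finds that $V$ is a weak solution of the stochastic transport equation \eqref{aux} with $V|_{t=0}=0$.

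Next I would regularize in space. Let $\{\rho_{\varepsilon}\}$ be as in the existence proof, put $V^{\varepsilon}:=V*\rho_{\varepsilon}$ (convolution in $x$), let $b^{\varepsilon}$ be the mollified drift and $\phi^{\varepsilon}_{t}$ the flow of $dX_{t}=b^{\varepsilon}(X_{t})\,dt+dB_{t}$, whose Jacobian obeys \eqref{est3}. Convolving \eqref{aux} gives
\[
\partial_{t}V^{\varepsilon}+b^{\varepsilon}\,\partial_{x}V^{\varepsilon}+\partial_{x}V^{\varepsilon}\circ dB_{t}=\mathcal{R}_{\varepsilon},\qquad V^{\varepsilon}|_{t=0}=0 ,
\]
with the commutator $\mathcal{R}_{\varepsilon}:=b^{\varepsilon}\,(u*\rho_{\varepsilon})-(b\,u)*\rho_{\varepsilon}$. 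The modified commutator lemma I would prove is that, contrary to the DiPerna--Lions situation, no cancellation is needed here: since $\partial_{x}V=u$ is a genuine $L^{2}_{\loc}$ function, $b$ is locally bounded, $bu\in L^{2}(\R)$ by \eqref{cond3-1}, and $b^{\varepsilon}\to b$ a.e., one gets $\mathcal{R}_{\varepsilon}\to0$ in $L^{2}(\Omega\times[0,T]\times K)$ for every compact $K$, together with the uniform pointwise bound $|\mathcal{R}_{\varepsilon}(t,x)|\le C(1+|x|)\,(|u(t,\cdot)|*\rho_{\varepsilon})(x)$.

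Then I would use the stochastic method of characteristics. Since $V^{\varepsilon}$ is smooth in $x$ and a semimartingale in $t$, integrating the equation for $V^{\varepsilon}$ along $\phi^{\varepsilon}_{t}$ (Kunita's theory, cf.\ \cite{Ku}), together with $V^{\varepsilon}|_{t=0}=0$, gives
\[
V^{\varepsilon}(t,y)=\int_{0}^{t}\mathcal{R}_{\varepsilon}\bigl(s,\Phi^{\varepsilon}_{s,t}(y)\bigr)\,ds , \qquad \Phi^{\varepsilon}_{s,t}(y):=\phi^{\varepsilon}_{s}\bigl((\phi^{\varepsilon}_{t})^{-1}(y)\bigr),
\]
and by the flow property $\Phi^{\varepsilon}_{s,t}$ is the backward flow of \eqref{itoassBac} from $(t,y)$, which depends only on the Brownian increments over $[s,t]$ and is therefore independent of $\mathcal{F}_{s}$, while $\mathcal{R}_{\varepsilon}(s,\cdot)$ is $\mathcal{F}_{s}$-measurable. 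Squaring, using Cauchy--Schwarz in $s$, taking expectations, disintegrating over the law of $\Phi^{\varepsilon}_{s,t}(y)$, integrating in $y$ against a fixed bounded positive weight $\theta$, and bounding the resulting kernel by \eqref{est3} (with $p=2$, for the forward flow on $[s,t]$, which has the law of $\phi^{\varepsilon}_{t-s}$), I would arrive at
\[
\int_{\R}\E\bigl[|V^{\varepsilon}(t,y)|^{2}\bigr]\,\theta(y)\,dy\;\le\;C\,t\int_{0}^{t}(t-s)^{-1/4}\Bigl(\int_{\R}\E\bigl[\mathcal{R}_{\varepsilon}(s,z)^{2}\bigr]\,e^{C_{2}z^{2}/2}\,dz\Bigr)\,ds .
\]
Splitting the inner integral at $|z|=R$, its part over $\{|z|\le R\}$ tends to $0$ as $\varepsilon\to0$ by the modified commutator lemma, while the part over $\{|z|>R\}$ is controlled, uniformly in $\varepsilon$, by the pointwise bound $|\mathcal{R}_{\varepsilon}|\le C(1+|\cdot|)(|u|*\rho_{\varepsilon})$ and the Gaussian-weighted integrability of $u$ inherited from $u_{0}\in L^{2}(\R,w\,dx)$, hence is small for $R$ large; so the right-hand side $\to0$. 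Finally, since $V^{\varepsilon}\to V$ in $L^{2}_{\loc}$ and $\theta>0$, this forces $V\equiv0$, whence $u=\partial_{x}V=0$ a.e.\ in $\Omega\times[0,T]\times\R$.

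The hard part will be the last estimate: the flow-Jacobian moments in \eqref{est3} only bound $e^{C_{2}z^{2}}$-weighted quantities, so one must reconcile this exponential growth with the integrability of $u$ — this is precisely the role of the Gaussian weight $w=e^{2k_{2}x^{2}}$ in Hypothesis \ref{hyp1}, which has to be propagated to $u$ (as in Step~2 of the existence proof) for the tail estimate to close — while at the same time the commutator must be made to vanish for a merely measurable, linearly growing drift, which is possible only because passing to the primitive $V$ buys one derivative.
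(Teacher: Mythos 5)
Your proposal follows the paper's strategy step for step up to and including the characteristics identity: reduction by linearity, passage to the primitive $V$ with the cut-off argument, mollification and the commutator $\mathcal{R}_\varepsilon$, and the It\^o--Wentzell/Kunita computation giving $V^\varepsilon(t,y)=\int_0^t\mathcal{R}_\varepsilon\big(s,\phi^\varepsilon_s((\phi^\varepsilon_t)^{-1}(y))\big)\,ds$. The divergence, and the genuine gap, is in how you pass to the limit. By squaring $V^\varepsilon(t,y)$, disintegrating over the law of the backward flow and integrating against a weight, you are forced to control
\begin{equation*}
\int_{\R}\E\big[\mathcal{R}_\varepsilon(s,z)^2\big]\,e^{C_2 z^2/2}\,dz
\end{equation*}
over all of $\R$, and you propose to close the tail over $\{|z|>R\}$ using ``Gaussian-weighted integrability of $u$ inherited from $u_0\in L^2(\R,w\,dx)$.'' No such inheritance is available. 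The definition of an $L^2$-weak solution only gives $u\in L^2(\Omega\times[0,T]\times\R,\mu\,dx)$ with the polynomial weight $\mu=(1+|x|)^2$; in a uniqueness proof $u$ is an \emph{arbitrary} solution, not the one built in the existence proof, so you cannot invoke the representation formula there. Moreover, even for the constructed solution the existence argument does not propagate the Gaussian weight: the factor $e^{k_2 y^2}$ carried by $u_0$ is entirely spent absorbing the Jacobian bound \eqref{est3}/\eqref{eq0}, and the output is only a $\mu$-weighted bound on $u$. Since $|\mathcal{R}_\varepsilon|\lesssim(1+|z|)(|u|\ast\rho_\varepsilon)(z)$, your tail term would require $\int(1+|z|)^2u^2e^{C_2z^2/2}\,dz<\infty$, which is neither assumed nor provable here; the right-hand side of your key inequality is not even known to be finite, so the argument does not close.

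The paper sidesteps exactly this difficulty by never squaring pointwise in $y$: it multiplies the characteristics identity by a fixed $\varphi\in C_0^\infty(\R)$, changes variables to write $\int_0^t\int\mathcal{R}_\varepsilon(x,s)\,JX^\varepsilon_{s,t}\,\varphi(X^\varepsilon_{s,t})\,dx\,ds$, and applies Cauchy--Schwarz on $\Omega\times[0,t]\times\R$. Then the commutator only needs to vanish in \emph{unweighted} $L^2(\Omega\times[0,T]\times\R)$ -- which it does, because $|b|\le k(1+|x|)$ and $(1+|x|)u\in L^2$ -- while the Gaussian factor $e^{k_2x^2}$ produced by \eqref{est3} is only ever evaluated on the compact support of $\varphi$, where it is harmless. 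If you wish to keep your $L^2$-in-$y$ formulation you would need a genuinely new ingredient (for instance Gaussian decay of the transition density of the backward flow strong enough to beat $e^{C_2 z^2}$), which the estimates available in the paper do not supply; otherwise, replace the final step by the test-function/Cauchy--Schwarz argument above, after which the rest of your outline (commutator convergence, $V\equiv0$, hence $u=\partial_xV=0$) is sound.
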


\begin{proof}

{\it Step 0: Set of solutions.} Remark that the set of  $L^{2}$- weak solutions is a linear subspace of $L^{2}( \Omega\times[0, T]\times \mathbb{R} , \mu dx)$, because the stochastic continuity equation is linear, and the regularity conditions is a linear constraint. Therefore, it is enough to show that a $L^{2}$- weak solution $u$ with initial condition $u_0= 0$ vanishes identically.

\bigskip

{\it Step 1:  Primitive of the solution.} We define $V(t,x)=\int_{-\infty}^{x} u(t,y) \ dy $ and we observe that $\partial_x V(t,x)$  belong to $L^{2}( \Omega\times[0, T]\times \mathbb{R},\mu dx )$. We consider a  nonnegative smooth cut-off function $\eta$ supported on the ball of radius 2 and such that  $\eta=1$ on the ball of radius 1. For any $R>0$, we introduce the rescaled functions $\eta_R (\cdot) =  \eta(\frac{.}{R})$.
Let be $\varphi\in C_0^{\infty}(\R)$, we observe that

\[
 \int_{\R} V(t,x) \varphi(x) \eta_R (x)  dx = - \int_{\R} u(t,x)   \theta(x) \eta_R (x)  dx
-\int_{\R} V(t,x)   \theta(x) \partial_x \eta_R (x)  dx\,,
\]

\noindent where $\theta(x) =\int_{-\infty}^{x}  \varphi(y)   \ dy$. By  definition of the solution $u$, taking as test function $ \theta(x) \eta_R (x)$ we have that $V(t,x)$  verifies

\begin{align}\label{DISTINTSTRTR}
    \int_{\R} & V(t,x) \ \eta_R (x) \varphi(x) dx = - \int_{0}^{t} \!\! \int_{\R}   \partial_x V(s,x)   \, b (x) \eta_R (x) \varphi(x) \ dx ds \nonumber\\[5pt]
    & - \int_{0}^{t} \!\! \int_{\R}   \partial_x V(s,x) \  \eta_R (x) \varphi(x) \ dx \, {\circ}{dB_s} - \int_{0}^{t} \!\! \int_{\R}   \partial_x V(s,x)   \, b (x) \partial_x \eta_R (x) \theta(x)  \ dx ds\nonumber\\[5pt]
    &- \int_{0}^{t} \!\! \int_{\R}   \partial_x V(s,x) \  \partial_x \eta_R (x) \theta(x)  \ dx \, {\circ}{dB_s}-\int_{\R} V(t,x)   \theta(x) \partial_x \eta_R (x)  dx.
\end{align}

Taking the limit as $R\rightarrow \infty$ we obtain

\begin{equation} \label{DISTINTSTRT}
\begin{aligned}
    \int_{\R} V(t,x) \varphi(x) dx = \\[5pt]
	- \int_{0}^{t} \!\! \int_{\R}   \partial_x V(s,x)   \, b (x)  \varphi(x) \ dx ds
- \int_{0}^{t} \!\! \int_{\R}   \partial_x V(s,x) \   \varphi(x) \ dx \, {\circ}{dB_s} .
\end{aligned}
\end{equation}

\bigskip

{\it Step 2: Smoothing.}
Let $\{\rho_{\varepsilon}(x)\}_\varepsilon$ be a family of standard symmetric mollifiers. For any $\varepsilon>0$ and $x\in\R^d$ we use $\rho_\varepsilon(x-\cdot)$ as test function, ten we get
$$
\begin{aligned}
      \int_{\R} V(t,y) \rho_\varepsilon(x-y) \, dy  = &\, - \int_{0}^{t}  \int_{\R} \big( b(y)  \partial_y V(s,y)  \big)  \rho_\varepsilon(x-y) \ dy ds
		\\[5pt]
    & -  \int_{0}^{t} \!\! \int_{\R} \partial_y V(s,y) \, \rho_\varepsilon(x-y)  \, dy \circ dB_s
\end{aligned}
$$

We set  $V_\varepsilon(t,x)= (V\ast \rho_\varepsilon)(x)$, $b_\varepsilon(x)= (b \ast \rho_\varepsilon)(x)$ and
$(bV)_\varepsilon(t,x)= (b.V\ast \rho_\varepsilon)(x)$. Then we deduce

$$
\begin{aligned}
    V_{\varepsilon}(t,x) + \int_{0}^{t} b_{\epsilon}(x)   \partial_x V_{\varepsilon}(s,x) \,  ds   +  \int_{0}^{t}   \partial_{x}  V_{\varepsilon}(s,x) \, \circ dB_s
				\\[5pt]
    & =         \int_{0}^{t} \big(\mathcal{R}_{\epsilon}(V,b) \big) (x,s) \,  ds ,
\end{aligned}
$$

\noindent where we denote
$ \mathcal{R}_{\epsilon}(V,b)  = b_\varepsilon \ \partial_x V_\varepsilon  -  (b\partial_x V)_\varepsilon  $.

\bigskip

{\it Step 3: Method of Characteristic.}
Applying the It\^o-Wentzell-Kunita formula   to $ V_{\varepsilon}(t,X_{t}^{\epsilon})$
, see Theorem 8.3 of \cite{Ku2}, we have

\[
   V_{\varepsilon}(t,X_{t}^{\epsilon})  = \int_{0}^{t} \big(\mathcal{R}_{\epsilon}(V,b) \big) (X_s^{\epsilon},s)  ds  .
\]

Then, considering that $X_{t}^{\epsilon}=X_{0,t}^{\epsilon}$ and $Y_{t}^{\epsilon}=Y_{0,t}^{\epsilon}=(X_{0,t}^{\epsilon})^{-1}$ 
we have that 

\[
   V_{\varepsilon}(t,x)  = \int_{0}^{t} \big(\mathcal{R}_{\epsilon}(V,b) \big) (X_{0,s}^{\epsilon}(Y_{0,t}^{\epsilon}),s)  ds=\int_{0}^{t} \big(\mathcal{R}_{\epsilon}(V,b) \big) (Y_{s,t}^{\epsilon},s)  ds  .
\]

Multiplying by the test functions $\varphi$ and integrating in $\R$ we get

\begin{equation}
   \int V_{\varepsilon}(t,x) \ \varphi(x) dx  =
	\int_{0}^{t}  \int \big(\mathcal{R}_{\epsilon}(V,b) \big) (Y_{s,t}^{\epsilon},s) \  \  \varphi(x) \ \, dx \   ds  .
\end{equation}

 \noindent We observe that

\begin{equation}
	\int_{0}^{t}  \int \big(\mathcal{R}_{\epsilon}(V,b) \big) (Y_{s,t}^{\epsilon},s) \ \varphi(x) \ \, dx \   ds =
	\int_{0}^{t}  \int \big(\mathcal{R}_{\epsilon}(V,b) \big) (x,s) \   JX_{s,t}^{\epsilon}  \varphi(X_{s,t}^{\epsilon}) \ \, dx \   ds .
\end{equation}

\bigskip

{\it Step 4: Convergence of the commutator.}   Now, we observe that $\mathcal{R}_{\epsilon}(V,b)$ converge to zero in
$L^{2}([0,T]\times \R )$. In fact,

\[
  (b \ \partial_x V)_{\varepsilon} \rightarrow b \ \partial_x V \ in \ L^{2}([0,T]\times \R ),
\]

and by the dominated convergence theorem we obtain

\[
b_{\epsilon}   \partial_x V_{\varepsilon} \rightarrow b \ \partial_x V \ in \ L^{2}([0,T]\times \R ).
\]

\bigskip

{\it Step 5: Conclusion.}  From step 3  we obtain

\begin{equation}\label{conv}
   \int V_{\varepsilon}(t,x) \ \varphi(x) dx  =
		\int_{0}^{t}  \int \big(\mathcal{R}_{\epsilon}(V,b) \big) (x,s) \   JX_{s,t}^{\epsilon}  \varphi(X_{s,t}^{\epsilon}) \ \, dx \   ds ,
\end{equation}

 Using  H\"older's inequality we have

\[
	\E \bigg|\int_{0}^{t}  \int \bigg(\mathcal{R}_{\epsilon}(V,b) \bigg) (x,s) \   JX_{s,t}^{\epsilon}  \varphi(X_{s,t}^{\epsilon}) \ \, dx \   ds \bigg|
	\]
	
	\[
	\leq   	 \bigg(\E \int_{0}^{t}  \int |\big(\mathcal{R}_{\epsilon}(V,b) \big) (x,s)|^{2} \  \, dx \   ds \bigg)^{\frac{1}{2}}
  \bigg(\E \int_{0}^{t}  \int  | JX_{s,t}^{\epsilon} \varphi(X_{s,t}^{\epsilon})|^{2} \ \, dx \ ds \bigg)^{\frac{1}{2}}
\]

\noindent  From step 4 result

\[
 \bigg(\E \int_{0}^{t}  \int |\big(\mathcal{R}_{\epsilon}(V,b) \big) (x,s)|^{2} \  \, dx \   ds \bigg)^{\frac{1}{2}}\rightarrow 0.
\]

From formula (\ref{est3}) we obtain 

\[
\bigg(\E \int_{0}^{t}  \int  | JX_{s,t}^{\epsilon} \varphi(X_{s,t}^{\epsilon})|^{2} \ \, dx \ ds \bigg)^{\frac{1}{2}}
\leq C \bigg(\E \int_{0}^{t}   |\varphi(x)|^{2} \ \, dx \ ds \bigg)^{\frac{1}{2}},
\]

%\[
%\leq  C \big(  \int |\varphi(x)|^{2} \ \, dx  \big)^{\frac{1}{2}}.
%\]

Passing to the limit in equation (\ref{conv}) we conclude that $V=0$. Then we deduce that  $u=0$.

\end{proof}

\section{$H^{1}(\mathbb{R}^{d})$ Solutions.}

We  will be considered the divergence-free condition, that is
\begin{equation}
\label{DIVB2}
  \dive \,  b= 0
\end{equation}
(understood in the sense of distributions).
\bigskip

\begin{definition}\label{defisoluH}
 A stochastic process $u\in   L^{2}( \Omega\times[0, T],  H^{1}(\mathbb{R}^{d}) ) \cap  L^{\infty}( \Omega\times[0, T] \times \mathbb{R}^{d} )$ is called a  $ H^{1}$- weak solution of the Cauchy problem \eqref{trasport} when: For any $\varphi \in C_0^{\infty}(\R^d)$, the real valued process $\int  u(t,x)\varphi(x)  dx$ has a continuous modification which is an $\mathcal{F}_{t}$-semimartingale, and for all $t \in [0,T]$, we have $\mathbb{P}$-almost surely
\begin{equation} \label{DISTINTSTR}
\begin{aligned}
    \int_{\R^d} u(t,x) \varphi(x) dx = &\int_{\R^d} u_{0}(x) \varphi(x) \ dx
	-\int_{0}^{t} \!\! \int_{\R^d}  \partial_i u(s,x)  \cdot \, b^{i} (s,x) \varphi(x) \ dx ds
\\[5pt]
    &- \int_{0}^{t} \!\! \int_{\R^d} \partial_{i}  u(s,x) \ \varphi(x) \ dx \, {\circ}{dB^i_s} \, .
\end{aligned}
\end{equation}
\end{definition}

%\label{lemmaito}

\begin{remark}  Analogously, as it was done in the remark \ref{lemmaito} we can write the problem (\ref{trasport}) in the
 It\^o form as follows, a  stochastic process $u   \in   L^{2}( \Omega\times[0, T],  H^{1}(\mathbb{R}^{d}) ) \cap  L^{\infty}( \Omega\times[0, T] \times \mathbb{R}^{d} )$
is  a $ H^{1}$- weak solution  of the SPDE (\ref{trasport}) iff for every test function $\varphi \in C_{0}^{\infty}(\mathbb{R}^{d})$, the process $\int u(t, x)\varphi(x) dx$ has a continuous modification which is a $\mathcal{F}_{t}$-semimartingale and satisfies the following It\^o's formulation

\begin{align*}
\int u(t,x) \varphi(x) dx  = & \int u_{0}(x) \varphi(x) \ dx \\[5pt]
& -\int_{0}^{t} \int b^{i}(s,x)\cdot \varphi(x) \partial_i  u(s,x) \ dx\, ds \\[5pt]
& - \int_{0}^{t} \int \varphi(x)  \partial_{i} u(s,x) \ dx  \ dB_{s}^{i}  \\[5pt]
& +\frac{1}{2}   \int_{0}^{t} \int  \Delta \,\varphi(x) u(s,x) \ dx\, ds.
\end{align*}

%\[
%\int u(t,x) \varphi(x) dx= \int u_{0}(x) \varphi(x) \ dx
%\]
%\[
%-\int_{0}^{t} \int b(s,x)\cdot \varphi(x) \nabla  u(s,x) \ dx\, ds
%\]
%\[
%-\sum_{i=0}^{d} \int_{0}^{t} \int \varphi(x)  \partial_{i} u(s,x) \ dx
%\ dB_{s}^{i} +\frac{1}{2}   \int_{0}^{t} \int  \Delta \,\varphi(x) u(s,x) \ dx\, ds \
%\]
\end{remark}

\subsection{Existence.}

\begin{lemma}\label{lemmaexis1} We assume that $u_0\in H^{1}(\R^{d})  \cap L^{\infty}(\R^{d})$ and  conditions (\ref{LPSC}) and   
(\ref{DIVB2}). Then there exists    $H^{1}$- weak solution $u$ of the Cauchy problem \eqref{trasport}.
\end{lemma}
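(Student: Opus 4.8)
The plan is to mimic the existence argument of Section 2: regularize the drift and the initial datum, solve the smooth problem by the stochastic characteristics method, establish uniform a priori bounds in the relevant function space, and pass to the limit along a weakly convergent subsequence. The essential difference here is that we want to propagate $H^1$ regularity and $L^\infty$ bounds rather than a weighted $L^2$ bound, and that we use the divergence-free condition $\dive b = 0$ crucially to control the $L^p$ norms along the flow.

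First I would set $b^\varepsilon = \eta_\varepsilon \, (b \ast \rho_\varepsilon)$ and $u_0^\varepsilon = \eta_\varepsilon \, (u_0 \ast \rho_\varepsilon)$ as in the previous section, so that $b^\varepsilon$ is smooth with bounded derivatives of all orders and $u_0^\varepsilon \to u_0$ in $H^1(\R^d) \cap L^p(\R^d)$ for every $p < \infty$, with $\|u_0^\varepsilon\|_{L^\infty} \le \|u_0\|_{L^\infty}$; note however that $\dive b^\varepsilon$ is not exactly zero because of the cutoff $\eta_\varepsilon$, so one keeps the exact Kunita representation
\[
u^\varepsilon(t,x) = u_0^\varepsilon(\psi_t^\varepsilon(x)) \exp\Big\{ -\int_0^t \dive b^\varepsilon(\phi_s^\varepsilon(\psi_t^\varepsilon(x)))\, ds \Big\}
\]
from \cite[Theorem 6.1.9]{Ku} for the smooth equation \eqref{STE-reg}, exactly as in the existence lemma of Section 2. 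Since $\dive b^\varepsilon \to 0$ locally and is bounded on the support region in a controlled way, the exponential factor stays harmless on bounded sets; alternatively, since the flow $\phi_t^\varepsilon$ preserves the region where $\eta_\varepsilon = 1$ up to the time the characteristic exits, one gets a uniform $L^\infty$ bound $\|u^\varepsilon(t,\cdot)\|_{L^\infty} \le C \|u_0\|_{L^\infty}$.

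Next I would derive the uniform $L^2_t H^1_x$ bound. Differentiating the equation \eqref{STE-reg} in $x_j$ (legitimate since everything is smooth), $w^\varepsilon := \partial_j u^\varepsilon$ solves a transport equation with the same noise plus a zeroth-order term $-\partial_j b^{\varepsilon,i}\, \partial_i u^\varepsilon$ (and, because $\dive b^\varepsilon \approx 0$, no bad zeroth-order term from the divergence up to the cutoff error). Testing against $w^\varepsilon$ itself and using the Stratonovich-to-Itô correction — the transport noise $\nabla u^\varepsilon \cdot \circ dB_t$ contributes zero to $\tfrac{d}{dt}\mathbb E\|w^\varepsilon\|_{L^2}^2$ because the corresponding Itô term $\tfrac12 \Delta u^\varepsilon$ integrates against $w^\varepsilon$ to $-\tfrac12\|\nabla w^\varepsilon\|^2$ which cancels against the quadratic variation term — one obtains
\[
\frac{d}{dt}\, \mathbb E \|\nabla u^\varepsilon(t)\|_{L^2}^2 \le C \|\nabla b^\varepsilon\|_{L^q_t L^p_x}\, \|\nabla u^\varepsilon(t)\|_{L^2}^{2} \cdot (\text{interpolation factor}),
\]
and a Gronwall-type argument using condition \eqref{LPSC} (this is precisely the structure of the Le Bris--Lions commutator estimate cited in the introduction) gives $\mathbb E \sup_{t\le T}\|u^\varepsilon(t)\|_{H^1}^2 \le C\big(T,\|b\|_{L^q_tL^p_x}\big)\, \|u_0\|_{H^1}^2$. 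The $L^\infty(\Omega\times[0,T]\times\R^d)$ bound is already in hand. Hence $\{u^\varepsilon\}$ is bounded in $L^2(\Omega\times[0,T];H^1(\R^d)) \cap L^\infty(\Omega\times[0,T]\times\R^d)$, and we extract a subsequence converging weakly in the first space and weakly-$*$ in the second to some $u$ in the stated class.

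Finally I would pass to the limit in the weak (Itô) formulation satisfied by $u^\varepsilon$: the linear terms $\int u^\varepsilon \partial_i\varphi\, dx \circ dB^i$ and $\tfrac12\int u^\varepsilon \Delta\varphi\, dx\, ds$ pass by weak $L^2$ convergence; the drift term $\int \partial_i u^\varepsilon\, b^{\varepsilon,i}\varphi\, dx\, ds$ requires that $b^\varepsilon \varphi \to b\varphi$ strongly in $L^{q'}_t L^{p'}_x$ (which holds since $\varphi$ is compactly supported and $b^\varepsilon \to b$ in $L^q_{\loc}L^p_{\loc}$), against which $\partial_i u^\varepsilon \rightharpoonup \partial_i u$ weakly, so the product converges — and one checks the semimartingale/continuous-modification property of $\int u(t,x)\varphi(x)\,dx$ as in \cite[Theorem 15]{FGP2}. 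I expect the main obstacle to be the uniform $H^1$ estimate: making the energy identity for $\nabla u^\varepsilon$ rigorous (the Itô--Stratonovich bookkeeping for the transport noise acting on the gradient, and absorbing the cutoff error in $\dive b^\varepsilon$) and closing the Gronwall inequality under the scaling-critical condition \eqref{LPSC} is the delicate point; everything else is a routine adaptation of the Section 2 existence argument and of \cite{FGP2,Fre1,BrisLion}.
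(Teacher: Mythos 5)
There is a genuine gap at the heart of your argument: the uniform $H^1$ bound. Under condition (\ref{LPSC}) the only assumption on the drift is $b\in L^q_t L^p_x$; nothing is assumed on $\nabla b$. Your Gronwall inequality is driven by the quantity $\|\nabla b^\varepsilon\|_{L^q_t L^p_x}$, but this is not bounded uniformly in $\varepsilon$ (since $\nabla b^\varepsilon=\nabla\rho_\varepsilon\ast b$ up to cutoff terms, it blows up like $\varepsilon^{-1}$), so the resulting estimate on $\mathbb{E}\|\nabla u^\varepsilon(t)\|_{L^2}^2$ degenerates as $\varepsilon\to 0$ and cannot give compactness. Worse, in your Eulerian energy computation the transport noise cancels exactly (the Stratonovich term preserves the $L^2$ norm, and the It\^o correction $\tfrac12\Delta$ cancels against the quadratic variation), so your scheme would equally ``prove'' persistence of $H^1$ regularity for the deterministic transport equation with $b\in L^q_tL^p_x$ only --- which is false in general. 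The whole point of the lemma is regularization by noise, and the noise must enter the a priori estimate in a nontrivial way; a mean energy estimate of the form you propose cannot see it. Integrating by parts to move the derivative off $b^\varepsilon$ does not help either, since it produces second derivatives of $u^\varepsilon$ that are not controlled. The appeal to the Le Bris--Lions commutator lemma is also misplaced here: that lemma yields convergence of a commutator to $0$ in $L^1_{\mathrm{loc}}$ (it is the tool for the uniqueness proof), not an a priori $H^1$ bound.

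The paper obtains the uniform bound by a Lagrangian, not Eulerian, route: from the representation $u^\varepsilon(t,x)=u_0^\varepsilon\big((\phi_t^\varepsilon)^{-1}(x)\big)$ one differentiates in $x$, changes variables along the flow (using $\dive b=0$, so the Jacobian is $1$ and $D(\phi_t^\varepsilon)^{-1}(\phi_t^\varepsilon)=\mathrm{Cof}\,(D\phi_t^\varepsilon)^{T}$), and then invokes the estimate $\sup_{t\le T}\sup_{x}\mathbb{E}\big[|D\phi_t^\varepsilon|^p\big]\le C_{d,p,T}$, uniformly in $\varepsilon$, from Lemma 5 of \cite{Fre1}. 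This flow-derivative estimate is exactly where the additive noise and condition (\ref{LPSC}) are used (via the Krylov--R\"ockner/Zvonkin machinery), and it yields $\int\mathbb{E}|\nabla u^\varepsilon(t,x)|^2dx\le C\int|\nabla u_0^\varepsilon|^2dx$ together with conservation of the $L^2$ and $L^\infty$ norms. Your regularization step, compactness step and passage to the limit in the weak It\^o formulation are fine and coincide with the paper's; what is missing is replacing your Gronwall argument by (or reducing it to) a uniform estimate on $D\phi_t^\varepsilon$ of this type. A smaller remark: the paper, like you, must cope with the fact that the cutoff destroys exact divergence-freeness of $b^\varepsilon$; it silently ignores the resulting exponential factor, whereas you at least flag it, but that issue is cosmetic compared with the missing flow estimate.
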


\begin{proof}

Let $\{\rho_\varepsilon\}_\varepsilon$ be a family of standard symmetric mollifiers. Consider a  nonnegative smooth cut-off function $\eta$ supported on the ball of radius 2 and such that $\eta=1$ on the ball of radius 1. For every $\varepsilon>0$ introduce the rescaled functions $\eta_\varepsilon (\cdot) = \eta(\varepsilon \cdot)$. Using these two families of functions we define the family of regularized coefficient as $b^{\epsilon}(t,x) = \eta_\varepsilon(x) \big( [ b(t,\cdot) \ast \rho_\varepsilon (\cdot) ] (x) \big) $. Similarly, we  define the family of regular approximations of the initial condition $u_0^\varepsilon (x) = \eta_\varepsilon(x) \big( [ u_0(\cdot) \ast \rho_\varepsilon (\cdot) ] (x)  \big) $.

Remark that any element $b^{\varepsilon}$, $u_0^\varepsilon$, $\varepsilon>0$ of the two families we have defined is smooth (in space) and compactly supported, therefore with bounded derivatives of all orders. Then, for any fixed $\varepsilon>0$, the classical theory of Kunita, see \cite{Ku} or \cite{Ku3}, provides the existence of an unique solution $u^{\varepsilon}$ to the regularized equation
\begin{equation}\label{trasport-reg}
 \left \{
\begin{aligned}
    &d u^\varepsilon (t, x,\omega) +  \nabla u^\varepsilon (t, x,\omega)  \cdot \big( b^\varepsilon (t, x)  dt +
 \circ d B_{t}(\omega) \big) = 0 \, ,
    \\[5pt]
    &u^\varepsilon \big|_{t=0}=  u_{0}^\varepsilon
\end{aligned}
\right .
\end{equation}
together with the representation formula
\begin{equation}\label{repr formula}
u^\varepsilon (t,x) = u_0^\varepsilon \big( (\phi_t^\varepsilon)^{-1} (x) \big)
\end{equation}
in terms of the (regularized) initial condition and the inverse flow $(\phi_t^\varepsilon)^{-1}$ associated to the equation of characteristics of \eqref{trasport-reg}, which reads
\begin{equation*}
d X_t = b^\varepsilon (t, X_t) \, dt + d B_t \, ,  \hspace{1cm}   X_0 = x \,.
\end{equation*}
Now, by Lemma 5 of  \cite{Fre1} we have that for every $p \geq 1$, there exists $C_{d,p,T}> 0$
such that
\begin{equation}\label{estima}
    \sup_{t \in [0,T]} \sup_{x \in \R^d} \mathbb{E}[| D(\phi_{t}^\varepsilon) |^p] \leq C_{d,p,T}, \quad \text{uniformly in $\epsilon> 0$}.
		\end{equation}
Then, we can use the random change of variables $(\phi_t^\varepsilon)^{-1}(x)  \mapsto x$ to obtain that
\begin{align}   \label{u_eps uniform}
\int_{\R^d} \E \big|  u^{\epsilon}(t,x) \big|^{2}  dx &=  \int_{\R^d} \E \big| u_{0}^{\epsilon}\big( (\phi_{t}^\varepsilon)^{-1} (x,\omega)\big) \big|^{2}  dx = \int_{\R^d}  \big|u_{0}^{\epsilon}(x) \big|^2 \, dx .   \nonumber  \\
\end{align}
Moreover, we have
\begin{align}   \label{u_eps uniform}
\int_{\R^d} \E \big| \nabla u^{\epsilon}(t,x) \big|^{2}  dx = & \int_{\R^d} \E \big| \nabla [ u_{0}^{\epsilon}\big( (\phi_{t}^\varepsilon)^{-1} (x,\omega)\big) ] \big|^{2}  dx \nonumber  \\
= & \int_{\R^d}  \big|\nabla u_{0}^{\epsilon}((\phi_{t}^\varepsilon)^{-1})  D(\phi_{t}^\varepsilon)^{-1} (x,\omega) \big|^2   dx \nonumber\\
= & \int_{\R^d}  \big|\nabla u_{0}^{\epsilon}(x) \big|^2   \E   |D(\phi_{t}^\varepsilon)^{-1} (\phi_{t}^\varepsilon ,\omega)|^{2} \, dx.\nonumber  \\
\end{align}
We observe that
\[
D(\phi_{t}^\varepsilon)^{-1} (\phi_{t}^\varepsilon ,\omega)= D^{-1}(\phi_{t}^\varepsilon),
\]
and
\[
 D^{-1}(\phi_{t}^\varepsilon)= Cof (D \phi_{t}^\varepsilon)^{T}
\]
where $Cof$ denoted the cofactor matrix of $D\phi_{t}^\varepsilon$.  By inequality (\ref{estima}) we deduce that
$ Cof (D\phi_{t}^\varepsilon)^{T} \in L^\infty \big(  \R^d\times[0,T], L^{2} (\Omega) \big) $. Thus we obtain that
\begin{align}   \label{u_eps uniform2}
\int_{\R^d} \E \big| \nabla u^{\epsilon}(t,x) \big|^{2}  dx & = \int_{\R^d}  \big|\nabla u_{0}^{\epsilon}(x) \big|^2   \E |  |D(\phi_{t}^\varepsilon)^{-1} (\phi_{t}^\varepsilon ,\omega)|^{2}| \, dx    \nonumber  \\
& \leq C  \int_{\R^d}  \big|\nabla u_{0}^{\epsilon}(x) \big|^2  \, dx.
\end{align}

\medskip

If $u^\varepsilon$ is a solution of \eqref{trasport-reg}, it is also a weak solution, which means that for any test function $\varphi \in C_0^\infty(\R^d)$, $u^\varepsilon$ satisfies
the following equation (written in It\^o form)
\begin{equation} \label{transintegralR2}
\begin{aligned}
    \int_{\R^d} u^\varepsilon(t,x) &\varphi(x) \, dx= \int_{\R^d} u^\varepsilon_{0}(x) \varphi(x) \, dx
   -\int_{0}^{t} \!\! \int_{\R^d}  \partial_i u^\varepsilon(s,x) \, b^{i,\varepsilon}(s,x)  \varphi(x) \, dx ds
\\[5pt]
    &-  \int_{0}^{t} \!\! \int_{\R^d}  \partial_{i} u^\varepsilon(s,x) \,  \varphi(x) \, dx \, dB^i_s
    + \frac{1}{2} \int_{0}^{t} \!\!\int_{\R^d} u^\varepsilon(s,x) \Delta \varphi(x) \, dx ds \, .
\end{aligned}
\end{equation}
To prove the existence of weak solutions to \eqref{trasport}  we  can extract subsequence (for simplicity the whole sequence), which converges   weakly to some $u$ in $L^{2}( \Omega\times[0, T],  H^{1}(\mathbb{R}^{d}) )$ and  weak star  in $ L^{\infty}( \Omega\times[0, T] \times \mathbb{R}^{d} )$. Then  via classical arguments, that is,  we can  pass to the limit in the above equation along this subsequence. This is done following the classical argument of \cite[Sect. II, Chapter 3]{Pardoux}, \cite[Theorem 15]{FGP2} and  \cite[Theorem 23]{Beck}.

\end{proof}

\subsection{Uniqueness.}

Before starting and proving the main theorem of this subsection, we shall introduce some further notation and the key lemma on commutators.
%\bigskip

\bigskip
Let  $\{\rho_{\varepsilon} \}$ be a family of standard positive symmetric mollifiers. Given two functions $f:\R^d \mapsto \R^d$ and $g:\R^d \mapsto \R$, the commutator $\calR_\varepsilon(f,g)$ is defined as
\begin{equation}\label{def commut}
    \mathcal{R}_{\varepsilon}(f,g):= (f \cdot \nabla ) (\rho_{\epsilon}\ast g )- \rho_{\varepsilon}\ast  (f\cdot \nabla g ) \, .
  \end{equation}
The following lemma is due to C. Le Bris and P.-L. Lions \cite[Lemma 5.1.]{BrisLion}.

\begin{lemma} \label{conmuting} (C. Le Bris - P. L.Lions )
Let  $f \in  L^2_\loc(\R^d ) $,  $g \in H^{1}(\R^d) $. Then,
passing to the limit as $\varepsilon\rightarrow 0$
\[
    \mathcal{R}_{\varepsilon}(f,g) \rightarrow 0  \qquad in  \qquad L^1_\loc(\R^d) \, .
  \]
\end{lemma}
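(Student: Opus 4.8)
The plan is to reduce the statement to the classical DiPerna–Lions commutator estimate by a careful manipulation of the convolution, exploiting the extra regularity $g\in H^1$ to relax the usual $W^{1,1}$ hypothesis on the vector field to mere $L^2_{\loc}$. First I would fix a compact set $K\subset\R^d$ and note that it suffices to prove convergence in $L^1(K)$. Writing out the definition,
\[
\calR_\varepsilon(f,g)(x)=\int_{\R^d}\rho_\varepsilon(x-y)\,\big(f(x)-f(y)\big)\cdot\nabla g(y)\,dy
+ f(x)\cdot\Big(\nabla(\rho_\varepsilon\ast g)(x)-\rho_\varepsilon\ast(\nabla g)(x)\Big),
\]
and the second term vanishes identically since convolution commutes with differentiation; so $\calR_\varepsilon(f,g)(x)=\int\rho_\varepsilon(x-y)(f(x)-f(y))\cdot\nabla g(y)\,dy$. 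The key point is that this is exactly the quantity that in the DiPerna–Lions argument one rewrites, after the change of variables $y=x-\varepsilon z$, as $\int\rho(z)\,\frac{f(x)-f(x-\varepsilon z)}{\varepsilon}\cdot\nabla g(x-\varepsilon z)\,\varepsilon\,dz$ — but here we must be more economical because $f$ is not weakly differentiable.

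Next I would split into the now-standard two-regime argument. Pick $\delta>0$ and approximate $g$ by $g_\delta\in C^\infty_c$ with $\|g-g_\delta\|_{H^1}<\delta$, and separately approximate $f$ by $f_\delta\in C^\infty$ with $\|f-f_\delta\|_{L^2(K')}<\delta$ on a slightly enlarged compact $K'$. By bilinearity, $\calR_\varepsilon(f,g)=\calR_\varepsilon(f_\delta,g_\delta)+\calR_\varepsilon(f-f_\delta,g_\delta)+\calR_\varepsilon(f_\delta,g-g_\delta)+\calR_\varepsilon(f-f_\delta,g-g_\delta)$. For the smooth-smooth term, $\calR_\varepsilon(f_\delta,g_\delta)\to 0$ uniformly on $K$ by a direct Taylor expansion (this is where one uses that $\rho_\varepsilon$ is symmetric, or simply bounds crudely). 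For the remaining three terms I would use the Young-type bound
\[
\|\calR_\varepsilon(f,g)\|_{L^1(K)}\le \|f\|_{L^2(K')}\,\|\nabla g\|_{L^2(K')}
\]
— valid \emph{uniformly in $\varepsilon$} — obtained from $|\calR_\varepsilon(f,g)(x)|\le\int\rho_\varepsilon(x-y)|f(x)||\nabla g(y)|\,dy+\int\rho_\varepsilon(x-y)|f(y)||\nabla g(y)|\,dy$ and two applications of Cauchy–Schwarz and Young's convolution inequality. This makes the three "bad" terms $O(\delta)$ uniformly in $\varepsilon$, and letting first $\varepsilon\to 0$ then $\delta\to 0$ gives the claim.

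The main obstacle — and the place where the $H^1$ hypothesis on $g$ is genuinely used rather than $W^{1,1}$ — is the uniform-in-$\varepsilon$ bound on the splitting remainder: with only $g\in BV$ or $W^{1,1}$ and $f\in L^2_{\loc}$ the pairing $f\cdot\nabla g$ need not even make sense, whereas the $L^2\times L^2$ duality here closes the estimate cleanly. I would therefore present the uniform bound as the technical heart of the argument, and treat the density/Taylor steps as routine. One should double-check that the symmetry of $\rho_\varepsilon$ is not actually needed here (unlike in the DiPerna–Lions $W^{1,1}$ case, where a cancellation of the form $\int\rho(z)z\,dz=0$ is invoked): because $g$ carries a full derivative in $L^2$, the crude bound suffices and no first-moment cancellation is required, which is precisely why the hypothesis on $f$ can be weakened all the way to $L^2_{\loc}$.
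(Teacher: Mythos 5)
Your argument is correct: the decomposition $\calR_\varepsilon(f,g)(x)=\int\rho_\varepsilon(x-y)(f(x)-f(y))\cdot\nabla g(y)\,dy$, the uniform-in-$\varepsilon$ bound $\|\calR_\varepsilon(f,g)\|_{L^1(K)}\le C\|f\|_{L^2(K')}\|\nabla g\|_{L^2(K')}$ via Cauchy--Schwarz and Young, and the density argument together give the stated convergence, and you are right that no first-moment cancellation of $\rho_\varepsilon$ is needed. The paper itself gives no proof of this lemma (it simply cites Lemma 5.1 of Le Bris--Lions), and your proof is essentially the standard argument of that reference, so there is nothing further to reconcile.
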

\bigskip

We can finally state our uniqueness result.
\bigskip

\begin{theorem}\label{uni}
Under the conditions   (\ref{LPSC}) and  (\ref{DIVB2}), uniqueness holds for  $H^{1}$- weak solutions of the Cauchy problem \eqref{trasport} in the following sense: if $u,v$ are $H^{1}$- weak solutions with the same initial data $u_{0}\in H^{1}(\mathbb{R}^{d})  \cap L^{\infty}(  \mathbb{R}^{d} )$, then  $u= v$ almost everywhere in $ \Omega  \times [0,T] \times \R^d $.
\end{theorem}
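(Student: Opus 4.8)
The plan is to follow the same strategy used in the $L^2$ case: by linearity of the stochastic continuity equation and of the regularity constraints (Step 0 of Theorem~\ref{uni2}), it suffices to show that a $H^1$-weak solution $u$ with $u_0=0$ vanishes identically. Fix $\varphi\in C_0^\infty(\R^d)$ and use the mollified test function $\rho_\varepsilon(x-\cdot)$ in the It\^o formulation of the definition. Since $\dive b=0$, the drift term $\int \partial_i u\, b^i\varphi\,dx$ can be handled as the divergence form $\int b^i\partial_i(u\varphi)\,dx - \int u\, b^i\partial_i\varphi\,dx$; after mollification this produces, for the spatial variable $x$, the commutator $\mathcal{R}_\varepsilon(b,u)$ exactly as defined in \eqref{def commut}, plus a lower-order term involving $(b\, u)*\rho_\varepsilon$ tested against $\partial_i\varphi$ which converges by standard mollifier estimates (using $b\in L^q_tL^p_x$ and $u\in L^\infty$). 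This yields an equation of the form
\begin{equation*}
 u_\varepsilon(t,x) + \int_0^t \partial_i u_\varepsilon(s,x)\,\circ dB^i_s = \int_0^t \big(\mathcal{R}_\varepsilon(b,u)(s,x) + e_\varepsilon(s,x)\big)\,ds,
\end{equation*}
where $u_\varepsilon=u*\rho_\varepsilon$ and $e_\varepsilon\to 0$ in an appropriate local norm.

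Next I would apply the It\^o--Wentzell--Kunita formula (Theorem~8.3 of \cite{Ku2}) to $u_\varepsilon(t,X_t^\varepsilon)$, where $X_t^\varepsilon=\phi_t^\varepsilon(x)$ is the flow of $dX_t=b^\varepsilon(t,X_t)\,dt+dB_t$. The transport part of the equation for $u_\varepsilon$ is not exactly matched by the characteristics of $b^\varepsilon$ (the commutator was built with $b$, not $b^\varepsilon$), but one can either mollify consistently or absorb the mismatch $\int(b-b^\varepsilon)\cdot\nabla u_\varepsilon\,ds$ into another error term that vanishes as $\varepsilon\to 0$ thanks to \eqref{estima} and the $H^1$ bound on $u$. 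After composing with the inverse flow $Y_{s,t}^\varepsilon=(\phi_{s,t}^\varepsilon)^{-1}$ and the change of variables with Jacobian $JX_{s,t}^\varepsilon=1$ (divergence-free flow), one arrives at
\begin{equation*}
 \int_{\R^d} u_\varepsilon(t,x)\varphi(x)\,dx = \int_0^t\!\!\int_{\R^d}\big(\mathcal{R}_\varepsilon(b,u)(s,x)+\tilde e_\varepsilon(s,x)\big)\,\varphi(X_{s,t}^\varepsilon(x))\,dx\,ds.
\end{equation*}

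Then I would take expectations, apply Cauchy--Schwarz in $(\omega,s,x)$, and bound $\E\int_0^t\int |\varphi(X_{s,t}^\varepsilon)|^2\,dx\,ds$ by $C\|\varphi\|_{L^2}^2$ using \eqref{estima} (the flow has bounded $L^2(\Omega)$ Jacobian and hence the pushforward of Lebesgue measure is controlled). The commutator Lemma~\ref{conmuting} gives $\mathcal{R}_\varepsilon(b,u)\to 0$ in $L^1_{\loc}$; to get the needed convergence in the dual pairing against the bounded factor $\varphi(X_{s,t}^\varepsilon)$ one uses $u\in L^\infty$ and $b\in L^q_tL^p_x$ to show $\{\mathcal{R}_\varepsilon(b,u)\}$ is equi-integrable on compact sets (or bounded in $L^r$ for some $r>1$), upgrading $L^1_{\loc}$ convergence to what is required. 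Letting $\varepsilon\to 0$ forces $\int u(t,x)\varphi(x)\,dx=0$ for all $t$ and all $\varphi$, hence $u\equiv 0$.

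The main obstacle I anticipate is the same gap between the $L^1_{\loc}$ conclusion of the Le Bris--Lions commutator lemma and the kind of convergence actually needed to kill the right-hand side after testing against $\varphi(X_{s,t}^\varepsilon)$ and integrating in $\omega$: one has a random, merely measurable factor $\varphi(X_{s,t}^\varepsilon(x))$ rather than a fixed smooth cutoff, so a genuine $L^1_{\loc}\to 0$ statement is not obviously enough. Resolving this requires an equi-integrability argument exploiting $u\in L^\infty$ together with $b\in L^q([0,T];L^p)$ (these give $\mathcal{R}_\varepsilon(b,u)$ bounded in $L^q_tL^p_x$, so interpolation with the $L^1_{\loc}$ convergence yields $L^1$ convergence on compacts, which combined with the uniform $L^\infty$ bound transferred through the volume-preserving flow suffices). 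A secondary technical point is justifying the passage from the Stratonovich formulation to a form amenable to the It\^o--Wentzell--Kunita formula and controlling the mismatch between $b$ and $b^\varepsilon$ in the transport term, both of which are handled exactly as in the proof of Theorem~\ref{uni2} and in \cite{FGP2,Fre1}.
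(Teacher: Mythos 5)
Your proposal does not follow the paper's argument: for Theorem \ref{uni} the paper abandons the characteristics/flow method of Theorem \ref{uni2} entirely and runs an energy (renormalization) argument instead. After mollifying in $x$ and producing the commutator $\mathcal{R}_\varepsilon(b,u)$ of \eqref{def commut}, the paper applies It\^o's formula to $u_\varepsilon^2$ (i.e.\ renormalizes with $\beta(r)=r^2$, see \eqref{1000}), multiplies by a \emph{deterministic} cutoff $\eta_R$, and uses $\dive b=0$ to integrate by parts so that all derivatives land on $\eta_R$, as in \eqref{eq V_eps^2}. The commutator then appears only in the pairing $\int 2u_\varepsilon\,\mathcal{R}_\varepsilon(b,u)\,\eta_R\,dx$, where the factor $2u_\varepsilon\eta_R$ is bounded (here is where $u\in L^\infty$ is used) and compactly supported, so the $L^1_{\loc}$ conclusion of Lemma \ref{conmuting} is exactly what is needed; afterwards $R\to\infty$ is handled with the integrability \eqref{LPSC} of $b$ on the annulus $R<|x|<2R$, giving $\int u^2(t,x)\,dx=0$ and hence $u\equiv 0$. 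This scheme never pairs the commutator with a random factor, which is precisely the difficulty you flag.

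In your scheme that difficulty is a genuine gap, and your proposed repair does not close it. The Cauchy--Schwarz step requires $\mathcal{R}_\varepsilon(b,u)\to 0$ in $L^2(\Omega\times[0,t]\times\R^d)$, globally in $x$; but under \eqref{LPSC} with $u\in L^2(\Omega\times[0,T];H^1)\cap L^\infty$ one only gets $b\cdot\nabla u_\varepsilon$ and $(b\cdot\nabla u)_\varepsilon$ bounded in $L^r_x$ with $1/r=1/p+1/2$, i.e.\ $r<2$, so no interpolation between the $L^1_{\loc}$ convergence of Lemma \ref{conmuting} and this bound can reach $L^2$; your equi-integrability argument only upgrades to $L^s_{\loc}$, $s<r<2$, on compact sets. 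Moreover the factor $\varphi(X_{s,t}^\varepsilon(x))$ is supported, in $x$, on a random unbounded set, so convergence on compacta does not control the pairing without tail estimates on the flow that \eqref{estima} does not provide. Two further points would also need justification: $JX_{s,t}^\varepsilon=1$ holds only if the regularized drift is still divergence free (it is for $b*\rho_\varepsilon$, but not if a cutoff $\eta_\varepsilon$ is inserted as in the existence proof), and the mismatch term $(b-b^\varepsilon)\cdot\nabla u_\varepsilon$ composed with the flow re-creates the same global random pairing problem. Note also that in Theorem \ref{uni2} the characteristics method works because the commutator there is built from the primitive $V$ in one dimension, where an actual $L^2$ convergence of the commutator is available; that structure is absent here, which is why the paper switches to the energy method, and you should too.
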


\begin{proof}

The proof is essentially based on energy-type estimates on $u$. However, to rigorously obtain it  two preliminary technical steps of regularization and localization are needed, where the above Lemma \ref{conmuting} will be used to deal with the commutators appearing in the regularization process.
\medskip

{\it Step 0: Set of solutions.} Remark that the set of  $H^{1}$- weak solutions is a linear subspace of $L^{2}( \Omega\times[0, T],  H^{1}(\mathbb{R}^{d}) )\cap   L^{\infty}( \Omega\times[0, T] \times \mathbb{R}^{d} )$, because the stochastic transport equation is linear, and the regularity conditions is a linear constraint. Therefore, it is enough to show that a $H^{1}$- weak solution $u$ with initial condition $u_0= 0$ vanishes identically.

\bigskip

{\it Step 1: Smoothing.}
Let $\{\rho_{\varepsilon}(x)\}_\varepsilon$ be a family of standard symmetric mollifiers. For any $\varepsilon>0$ and $x\in\R^d$ we can use $\rho_\varepsilon(x-\cdot)$ as test function, we get
$$
\begin{aligned}
      \int_{\R^d} u(t,y) \rho_\varepsilon(x-y) \, dy  = &\, - \int_{0}^{t}  \int_{\R^d} \big( b^{i}(s,y) \cdot \partial_i u(s,y)  \big)  \rho_\varepsilon(x-y) \ dy ds    \\[5pt]
    & - \frac{1}{2}\int_{0}^{t}   \int_{\R^d} \! \partial_i u(s,y) \,  \partial_i \, \rho_\varepsilon(x-y) \, dy  ds
		\\[5pt]
    & +  \int_{0}^{t} \!\! \int_{\R^d} u(s,y) \, \partial_i \rho_\varepsilon(x-y)  \, dy \, dB^i_s.
\end{aligned}
$$
Set $u_\varepsilon(t,x)= u(t,x) \ast_x \rho_\varepsilon(x)$. Using the definition \eqref{def commut} of the commutator $\big(\calR_{\epsilon}(f,g)\big) (s)$ with $f=b(s, \cdot)$ and $g=u(s, \cdot)$, we have for each $t \in [0,T]$
$$
\begin{aligned}
    u_{\varepsilon}(t,x) + \int_{0}^{t} b^{i}(s,x) \cdot  \partial_i u_{\varepsilon}(s,x) \,  ds   & -
			\frac{1}{2}\int_{0}^{t}   \Delta u_{\varepsilon}(s,x) \, ds   \\[5pt]
    & 	+ \int_{0}^{t}   \partial_{i}  u_{\varepsilon}(s,x) \, dB^i_s
				\\[5pt]
    & =         \int_{0}^{t} \big(\mathcal{R}_{\epsilon}(b,u) \big) (s) \,  ds  \, .
\end{aligned}
$$
By It\^o formula we have
\begin{align}\label{1000}
u_{\epsilon}^2(t,x) = & -\I_0^t  b^{i}(s,x)\cdot \partial_i u^2_{\epsilon}(s,x) ds -\int_{0}^{t} \int_{\R^d}\partial_{i} u_{\epsilon}^2(s,x) \ dx \, {dB^i_s} \nonumber\\
&\, +\frac{1}{2}\I_0^t \triangle u^2_{\epsilon}(s,x) dx + \I_0^t 2u_{\epsilon}(s,x)\mathcal{R}_{\epsilon} (b,u)(s,x) ds
\end{align}

%\begin{equation}
%\label{1000}
%\begin{aligned}
%    u_{\varepsilon}^{2}(t,x) + \int_{0}^{t} b(s,x) \cdot  \nabla \big( u_{\varepsilon}^{2}(s,x) \big) \, ds
%		+   \int_{0}^{t}   \partial_{x_i}  u_{\varepsilon}^{2}(s,x) \, dB^i_s
%-  \int_{0}^{t}    u_\varepsilon (s,x)  \Delta u_{\varepsilon} (s,x) \, ds
%\\[5pt]
%	=	 2 \int_{0}^{t}  u_{\varepsilon}(s,x)   \mathcal{R}_{\epsilon}(b,u) \, ds  \, .
%\end{aligned}
%\end{equation}

\bigskip
{\it Step 2: Localization.} Consider a  nonnegative smooth cut-off function $\eta$ supported on the ball of radius 2 and such that  $\eta=1$ on the ball of radius 1. For any $R>0$ introduce the rescaled functions $\eta_R (\cdot) =  \eta(\frac{.}{R})$. Multiplying \eqref{1000} by $\eta_R$ and integrating over $ \R^d $ we have
\begin{align*}
\I_{\R^d} u_{\epsilon}^2(t,x)\eta_R(x) dx = & -\I_0^t\I_{\R^d} b^{i}(s,x)\cdot \partial_i u_{\epsilon}^2(s,x) \eta_R(x) dx ds \\[5pt]
                                 &  - \int_{0}^{t} \int_{\R^d} \partial_{i} u_{\epsilon}^2(s,x)\eta_R(x)  \ dx \,{dB^i_s} \nonumber \\[5pt]
                                 & +\frac{1}{2}\I_0^t\I_{\R^d} \triangle u^2_{\epsilon}(s,x) \eta_R(x)dx \,ds \\[5pt]
                                 & + \I_0^t \int_{\R^d} 2u_{\epsilon}(s,x)\mathcal{R}_{\epsilon} (b,u)(s,x) \eta_R(x) ds,
\end{align*}
which we rewrite as
\begin{align} \label{eq V_eps^2}
\I_{\R^d} u_{\epsilon}^2(t,x)\eta_R(x) dx = & \I_0^t\I_{\R^d} u_{\epsilon}^2(s,x)  b^{i}(s,x)\cdot \partial_i \eta_R(x) dx ds \nonumber\\[5pt]
                                 &  + \int_{0}^{t} \bigg(\int_{\R^d}  u_{\epsilon}^2(s,x) \partial_{i}\eta_R(x)  \ dx \bigg)\,{dB^i_s} \nonumber \\[5pt]
                                 & +\frac{1}{2}\I_0^t\I_{\R^d}  u^2_{\epsilon}(s,x) \triangle \eta_R(x)dx \,ds\nonumber \\[5pt]
                                 & + \I_0^t \int_{\R^d} 2u_{\epsilon}(s,x)\mathcal{R}_{\epsilon} (b,u)(s,x) \eta_R(x) ds.
\end{align}

\bigskip
{\it Step 4: Passage to the limit.} Finally, in this step we shall pass to the limit in $\varepsilon$ and $R$ to obtain uniqueness. We first take the limit $\varepsilon\rightarrow 0$ in the above equation \eqref{eq V_eps^2}. By standard properties of mollifiers $u_\varepsilon \to u$ strongly in $L^{2}\big([0,T]; H^1(\R^d)\big) $. Then  using  Lemma \ref{conmuting}, we obtain
 \begin{align}\label{eqen}
\I_{\R^d} u^2(t,x)\eta_R(x) dx = & \I_0^t\I_{\R^d} u^2(s,x)  b^{i}(s,x)\cdot \partial_i\eta_R(x) dx ds \nonumber\\[5pt]
                                 &   + \int_{0}^{t} \bigg(\int_{\R^d}  u^2(s,x) \partial_{i}\eta_R(x)  \ dx \bigg)\,{dB^i_s} \nonumber\\[5pt] & +\frac{1}{2}\I_0^t\I_{\R^d}  u^2(s,x) \triangle \eta_R(x)dx \,ds\,.
\end{align}
Now, we observe that
$$
\bigg|\int_{0}^{t}  \int_{\R^d}  u^{2}(s,x)  b^{i}(s,x) \cdot  \partial_i \eta_R(x) \,  dx   ds \bigg| \leq C \bigg|\int_{0}^{t}  \bigg(\int_{ R<|x|< 2R}    |b(s,x) |^{p} \,  dx  \bigg)^{q/p} ds\bigg|^{1/q}.
$$
Thus, taking the limit in (\ref{eqen}) as $R\rightarrow\infty$ we have
$$
\begin{aligned}
    \int_{\R^d}   u^{2}(t,x) \,  dx  = 0
	\end{aligned}
$$
Taking expectation and integrating on $[0,T]$ we have
$$
\begin{aligned}
    \I_{\Omega} \I_0^T \int_{\R^d}  u^{2}(t,x) \,  dx  \ dt \ \mathbb{P}(d\omega)=0\,.
	\end{aligned}
$$
Therefore, we conclude that  $u=0$ almost everywhere on $\Omega\times [0,T] \times \R^d$.
\end{proof}

\section{Appendix}

\begin{lemma}\label{lemma pe}
Assume $b\in C_c^{\infty}(\R)$ and that satisfies the hypothesis \ref{hyp1}. Then for  $T>0$  there are  constants $k_1=k_1(k,T)$ and $k_2=k_2(k,T)$ such that
\begin{align}\label{eq0}
\mathbb{E}\bigg[\bigg|\frac{d}{dx}X_t(x)\bigg|^{-2}\bigg] \leq  k_1  t^{-3/8} e^{k_2 x^2}  \,,
\end{align}
where $k_1=  \sqrt{c_1}\sqrt[4]{c_2}e^{35 T k^2}  $ and $k_2=2(k+99 T k^2)$ ($c_1$ and $c_2$ are defined below in the proof).
\end{lemma}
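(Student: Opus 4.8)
The plan is to make the Jacobian of the flow explicit and then trade the factor $b'$ — which would otherwise force a dependence on $\|b'\|_\infty$ and wreck the uniformity over the mollified fields — for terms involving only $b$. Since the noise in \eqref{itoass} is additive, $J_t:=\frac{d}{dx}X_t(x)$ solves the linear random ODE $\dot J_t=b'(X_t)J_t$, $J_0=1$, so $\big(\frac{d}{dx}X_t\big)^{-2}=\exp\!\big(-2\int_0^t b'(X_s)\,ds\big)$. Fixing a primitive $F$ of $b$ (say $F(y)=\int_0^y b$, so that $|F(y)|\le k(1+y^2)$ by \eqref{cond3-1} and $F\in C^2$ with $F''=b'$), It\^o's formula applied to $F(X_t)$ gives $\int_0^t b'(X_s)\,ds=2\big(F(X_t)-F(x)\big)-2\int_0^t b(X_s)^2\,ds-2\int_0^t b(X_s)\,dB_s$, whence $\big(\frac{d}{dx}X_t\big)^{-2}=\exp\!\big(4\int_0^t b(X_s)\,dB_s-4(F(X_t)-F(x))+4\int_0^t b(X_s)^2\,ds\big)$, an expression in which only $b$ appears and to which the growth bound \eqref{cond3-1} can now be applied.

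Next I would separate an exponential martingale: writing $\exp\!\big(4\int_0^t b(X_s)\,dB_s\big)=\mathcal E_t\exp\!\big(8\int_0^t b(X_s)^2\,ds\big)$, where $\mathcal E_t$ is the stochastic exponential of $4\int_0^{\cdot} b(X_s)\,dB_s$ — a nonnegative supermartingale with $\mathbb E[\mathcal E_t]\le 1$, for which no integrability of $b$ is needed — one obtains $\mathbb E\big[\big(\frac{d}{dx}X_t\big)^{-2}\big]=\mathbb E\big[\mathcal E_t\exp\!\big(12\int_0^t b(X_s)^2\,ds-4(F(X_t)-F(x))\big)\big]$. Applying H\"older's inequality (and again extracting a supermartingale from a power of $\mathcal E_t$) reduces the estimate to exponential moments of $\int_0^t b(X_s)^2\,ds$ and of $F(X_t)$; the two nested applications explain the $\sqrt{c_1}$ and $\sqrt[4]{c_2}$ in the statement, while $b(y)^2\le 2k^2(1+y^2)$ and $|F(y)|\le k(1+y^2)$ pull out the factor $e^{35Tk^2}$ and leave exponential moments of $X_t^2$ and $\int_0^t X_s^2\,ds$.

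The main obstacle is precisely this last step, which is where the minor modification of \cite[Lemma 3.6]{nilssen} enters. A crude Gr\"onwall bound on $\sup_{[0,T]}|X_s|$ is not good enough: it produces factors $e^{kT}$ inside the exponential and fails to give \emph{finite} exponential moments of $X_t^2$ once $kT$ is large. One must instead exploit the Gaussian law of the driving noise together with the merely linear growth of the drift — for instance by a change of measure turning $X-x$ into a Brownian motion combined with an occupation-time/local-time representation of $\int_0^t b'(X_s)\,ds$, or by a time-localization and iteration of the flow property — in order to get $\mathbb E[\exp(\alpha X_t^2)]\le C e^{c_2 x^2}$ and the analogous bound for $\mathbb E[\exp(\alpha\int_0^t X_s^2\,ds)]$, the latter carrying the (non-sharp) small-time factor $t^{-3/8}$. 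Tracking all constants through the H\"older splittings of the previous step then delivers $k_1=\sqrt{c_1}\sqrt[4]{c_2}\,e^{35Tk^2}$ and $k_2=2(k+99Tk^2)$, and the lemma follows; everything besides this exponential-moment estimate is routine It\^o calculus and H\"older's inequality.
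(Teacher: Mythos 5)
Your opening moves are fine and in fact close in spirit to the paper: the identity $\big(\frac{d}{dx}X_t\big)^{-2}=\exp\{-2\int_0^t b'(X_s)\,ds\}$ and the idea of trading $\int_0^t b'$ for quantities involving only $b$ by applying It\^o's formula to a primitive of $b$ are both used there, and your martingale/H\"older splittings are the same kind of manipulations. The genuine gap is the step you yourself flag as the ``main obstacle'' and then do not carry out: after your reductions you need $\E[\exp(\alpha X_t^2)]\le Ce^{c x^2}$ and $\E[\exp(\alpha\int_0^t X_s^2\,ds)]\le C t^{-3/8}e^{c x^2}$ \emph{for the diffusion $X$ under the original measure}, with constants depending only on $k,T$ (hence uniform over the mollified drifts). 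That estimate is not routine: it is essentially the whole content of the lemma, it is delicate because Gaussian-type exponential moments of this kind are only finite when the exponent $\alpha$ is compatible with the time horizon (exactly what the explicit integrals $c_1,c_2$ in the statement are tracking), and ``tracking all constants through the H\"older splittings'' cannot be asserted without it. Your suggested remedies are either a change of measure turning $X-x$ into Brownian motion --- which is Girsanov, i.e.\ precisely the paper's \emph{first} step, so you would be deferring to the argument you were supposed to supply, and you would then also have to control the Girsanov density $\mathcal{E}(\int_0^t b(x+B_s)\,dB_s)$, which you never address --- or an unsubstantiated occupation-time/local-time or iteration scheme.

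For comparison, the paper's proof avoids your obstacle by reversing the order of operations: it applies Girsanov at the outset, so that $\E[|\frac{d}{dx}X_t|^{-2}]$ becomes an expectation of $\exp\{-2\int_0^t b'(x+B_s)\,ds\}$ times the Girsanov exponential, with everything evaluated along the Brownian path $x+B_s$. The primitive/It\^o trick is then performed along Brownian motion, and all the resulting exponential moments are of $|B_t|$, $B_t^2$ and $\sup_{s\le t}|x+B_s|$; these are handled by explicit Gaussian integrals (producing $c_1,c_2$ and the $t^{-1/4}$, $t^{-1/8}$ factors that combine into $t^{-3/8}$) and Doob's maximal inequality, while the Girsanov density is absorbed by the $\alpha=100$ exponential-martingale splitting. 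Incidentally, your attribution of the $t^{-3/8}$ factor to a blow-up of $\E[\exp(\alpha\int_0^t X_s^2\,ds)]$ as $t\to0$ is off: such a moment tends to $1$ as $t\to 0$; in the paper the negative powers of $t$ come simply from the normalization $\frac{1}{\sqrt{2\pi t}}$ of the Gaussian density after the exponent $-z^2/(2t)$ is weakened to $-z^2/(2T)$. As it stands, your proposal reduces the lemma to an unproven estimate that is at least as hard as the lemma itself, so it is not a complete proof.
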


\begin{proof}
We consider the SDE associated to the vector field $b$ :
\begin{equation*}
d X_t = b (X_t) \, dt + d B_t \, ,  \hspace{1cm}   X_0 = x \,.
\end{equation*}

\noindent We denote
\begin{align*}
\mathcal{E}\bigg(\int_0^t b(X_u)dB_u\bigg)=\exp\bigg\{\int_0^t b(X_u)dB_u-\frac{1}{2} \int_0^{t} b^2(X_u)du \bigg\},
\end{align*}

 and
\begin{align*}
d Q(\omega)= \mathcal{E}\bigg(\int_0^t b(X_u)dB_u\bigg) d\mathbb{P}(\omega). 
\end{align*}
Using the Girsanov's theorem  we obtain that

\begin{align*}
\E\bigg[\bigg|\frac{d X_t}{dx}(x)\bigg|^{-2}\bigg ] & = \E_{Q}\bigg[\bigg|\frac{d X_t}{dx}(x)\bigg|^{-2}\bigg] \\
 & = \E\bigg[\exp\bigg\{-2\int_0^t b'(x+B_s) ds \bigg\} \mathcal{E}\bigg(\int_0^t b(x+B_s)dB_s\bigg)\bigg].
\end{align*}
Now, we proceed as in the proof of the Lemma 3.6 of \cite{nilssen}. Let $ b_1=-b$, then  we have
\begin{align*}
\E\bigg[\bigg|\frac{d X_t}{dx}(x)\bigg|^{-2}\bigg ] = \E\bigg[\exp\bigg\{2\int_0^t b_1'(x+B_s) ds \bigg\} \mathcal{E}\bigg(\int_0^t b(x+B_s)dB_s\bigg)\bigg]. 
\end{align*}

Applying the  Itô's formula to $\tilde{b}(z)=\int_{\infty}^z b_1(y)dy$  we get 

\begin{align*}
\tilde{b}(x+B_t)=\tilde{b}(x)+\int_0^t b_1(x+B_s) dB_s+\frac{1}{2} \int_0^t b_1'(x+B_s) ds\,.
\end{align*}

 By H\"older inequality we get

\begin{align}\label{eq1}
\E\bigg[\bigg|\frac{d X_t}{dx}(x)\bigg|^{-2}\bigg ] & = \E\bigg[\exp\bigg\{4(\tilde{b}(x+B_t) -\tilde{b}(x) -\int_0^t b_1(x+B_s) dB_s )\bigg\} \mathcal{E}\bigg(\int_0^t b(x+B_s)dB_s\bigg)\bigg] \nonumber\\
& \leq \|\exp\{4(\tilde{b}(x+B_t) -\tilde{b}(x) )\} \|_{L^2(\Omega)} \ \| \exp\bigg\{-4\int_0^t b_1(x+B_s) dB_s \bigg\}\times \nonumber\\
& \quad\quad\quad\quad\times \mathcal{E}\bigg(\int_0^t b(x+B_s)dB_s\bigg) \|_{L^2(\Omega)}\,.
\end{align}

 For the first term, we  have 
\begin{align*}
|\tilde{b}(x+B_t) -\tilde{b}(x)| & =|\int_0^1 b_1(x+\theta(B_t)) d\theta| \ |B_t| \\
& \leq \int_0^1 (k+k|x+\theta B_t|)d \theta |B_t| \\
& \leq k |B_t|+k|x||B_t|+\frac{k}{2}(B_t)^2 \\
& \leq \frac{k}{2}x^2+k |B_t|+k(B_t)^2 . 
\end{align*}
Thus we obtain 
\begin{align*}
\E[\exp\{8(\tilde{b}(x+B_t) -\tilde{b}(x) )\}] & \leq \E[\exp\{8(\frac{k}{2}x^2+k |B_t|+k(B_t)^2)\}] \\
& = e^{4kx^2} \E[\exp\{8(k |B_t|+k(B_t)^2)\}] \\
& = e^{4kx^2} \frac{1}{\sqrt{2\pi t}}\int_{\R} \ \exp\{8k(|z|+z^2)-\frac{z^2}{2 t}\} \ dz.
\end{align*}
Then , we conclude that 

\begin{align}\label{eq2}
\|\exp\{4(\tilde{b}(x+B_t) -\tilde{b}(x) )\} \|_{L^2(\Omega)} \leq  e^{2kx^2} t^{-1/4} \sqrt{c_1},
\end{align}

\noindent where
\begin{align*}
c_1=\frac{1}{\sqrt{2\pi }}\int_{\R} \exp\{8k(|z|+z^2)-\frac{z^2}{2 T}\} dz . 
\end{align*}

For the second term of \eqref{eq1} we have
\begin{align}\label{eq3}
\E & \bigg[\exp \bigg\{-8\int_0^t b_1(x+B_s) dB_s \bigg\} \mathcal{E}\bigg(\int_0^t b(x+B_s)dB_s\bigg)^2\bigg] \nonumber\\
& \E\bigg[\exp \bigg\{-8\int_0^t b_1(x+B_s) dB_s \bigg\} \exp\bigg\{2\int_0^t b(x+B_s)dB_s-\int_0^t b^2(x+B_s)ds\bigg\}\bigg] \nonumber\\
& =\E\bigg[\exp \bigg\{-10\int_0^t b_1(x+B_s) dB_s -\int_0^t b^2_1(x+B_s) ds\bigg\}\bigg] \nonumber\\
& = \E\bigg[\exp \bigg\{-10\int_0^t b_1(x+B_s) dB_s -\alpha \int_0^t b^2_1(x+B_s) ds\bigg\}\exp\bigg\{(\alpha-1)\int_0^t b^2_1(x+B_s) ds\bigg\}\bigg] \nonumber\\
& \leq \|\exp \bigg\{-10\int_0^t b_1(x+B_s) dB_s -\alpha \int_0^t b^2_1(x+B_s) ds\bigg\}\|_{L^2(\Omega)} \times \nonumber\\
& \quad\quad\times \|\exp\bigg\{(\alpha-1)\int_0^t b^2_1(x+B_s) ds\bigg\}\|_{L^2(\Omega)}. 
\end{align}
Now, we choose $\alpha=100$ because $\frac{1}{2}(-20 b_1(x+B_s))^2=2\alpha b_1^2(x+B_s)$. Then the process $\exp\{-20\int_0^t b_1(x+B_s) dB_s-200\int_0^t b_1^2(x+B_s)ds\}=\mathcal{E}\bigg(\int_0^t (-20b_1(x+B_s) dB_s)\bigg)$ is a martingale
with expectation equal to one. Then 

$$\|\exp \bigg\{-10\int_0^t b_1(x+B_s) dB_s -100 \int_0^t b^2_1(x+B_s) ds\bigg\}\|_{L^2(\Omega)}=1$$

From \eqref{cond3-1} we obtain that the second term of \eqref{eq3} is bounded by
\begin{align*}
\E\bigg[\exp\bigg\{2(\alpha-1) & \int_0^t b^2_1(x+B_s) ds\bigg\}\bigg] = \E\bigg[\exp\bigg\{198\int_0^t b^2_1(x+B_s) ds\bigg\}\bigg] \\
& \leq \E\bigg[\exp\bigg\{198\int_0^t k^2(1+|x+B_s|)^2 ds\bigg\}\bigg] \\
& \leq \E\bigg[\exp\bigg\{198t k^2(1+B_t^{*})^2 \bigg\}\bigg]\,,
\end{align*}
where $B_t^{*}=\displaystyle\sup_{s\leq t}|x+B_s|$. We define

$$Y_s=\exp\{99 t k^2(1+|x+B_s|)^2 \}.$$

 Then, by Doob's Maximal inequality we have
\begin{align*}
\E\bigg[ \exp\bigg\{198t k^2(1+B_t^{*})^2 \bigg\}\bigg] & =\E\bigg[\sup_{s\leq t}Y_s^2\bigg] \\
& \leq 4 \ \E[Y_t^2]=\E[\exp\{198 t k^2(1+|x+B_t|)^2\}] \\
& \leq 4 \ \E[\exp\{396 t k^2(1+(x+B_t)^2)\}] \\
& \leq 4 \ \E[\exp\{396 t k^2(1+2(x^2+B_t^2))\}] \\
& = 4e^{396 t k^2}e^{792 t k^2 x^2}\E[\exp\{792k^2 t B_t^2\}] \\
& = 4e^{396 t k^2}e^{792 t k^2 x^2} \frac{1}{\sqrt{2\pi t}}\int_{\R} \exp\{792 t k^2 z^2-\frac{z^2}{2 t}\} dz.
\end{align*}
Substituting in \eqref{eq3}  we get
\begin{align}\label{eq4}
\E \bigg[\exp \bigg\{-8\int_0^t b_1(x+B_s) dB_s \bigg\} \mathcal{E}\bigg(\int_0^t b(x+B_s)dB_s\bigg)^2\bigg] \leq e^{198 T k^2}e^{396 T k^2 x^2} t^{-1/4}\sqrt{c_2}\,,
\end{align}
where
$$c_2=\frac{4}{\sqrt{2\pi }}\int_{\R} \exp\{792 T k^2 z^2-\frac{z^2}{2 T}\} dz.$$
Therefore, replacing \eqref{eq2} and \eqref{eq4} in \eqref{eq1} we conclude
\begin{align*}
\E\bigg[\bigg|\frac{d X_t}{dx}(x)\bigg|^{-2}\bigg ]  &\leq   e^{2kx^2} t^{-1/4} \sqrt{c_1}e^{99 T k^2}e^{198 T k^2 x^2} t^{-1/8}\sqrt[4]{c_2}  \\
& = \sqrt{c_1}\sqrt[4]{c_2}e^{99 T k^2}  t^{-3/8}e^{2(k+99 T k^2) x^2}  \\
& = k_1  t^{-3/8} e^{k_2 x^2}\,,
\end{align*}
where $k_1=  \sqrt{c_1}\sqrt[4]{c_2}e^{99 T k^2}  $ and $k_2=2(k+99 T k^2)$.  This proves \eqref{eq0}.
\end{proof}

%%%%%%%%%%%%%%%%%%%%%%%%
\section*{Acknowledgements}
%%%%%%%%%%%%%%%%%%%%%%%%
    Christian Olivera C. O. is partially supported by  CNPq
through the grant 460713/2014-0 and FAPESP by the grants 2015/04723-2 and 2015/07278-0.

%%%%%%%%%%%%%%%%%

\end{document}